\theoremstyle{theorem}
\newtheorem{theorem}{Theorem}
\theoremstyle{proposition}
\newtheorem{proposition}{Proposition}
\newtheorem{conjecture}{Conjecture}
\theoremstyle{lemma}
\theoremstyle{definition}
\newtheorem{definition}{Definition}
\newtheorem*{remark}{Remark}
\newcommand{\mat}[1]{\ensuremath{\boldsymbol{#1}}}
\newcommand{\matg}[1]{\ensuremath{\bm #1}}
\newcommand{\e}{\ensuremath{\varepsilon}}
\newcommand{\M}{\ensuremath{\mat{M}}}
\begin{document}

\title{A Fractal Eigenvector}
\markright{A Fractal Eigenvector}
\author{Neil J.~Calkin, Eunice Y.~S.~Chan, Robert M.~Corless,\\ David J.~Jeffrey, and Piers W.~Lawrence}
% Calkin, Chan, Corless, Jeffrey, Lawrence
\maketitle

\begin{abstract}
The recursively-constructed family of Mandelbrot matrices $\M_n$ for $n=1$, $2$, $\ldots$ have nonnegative entries (indeed just $0$ and $1$, so each $\M_n$ 
%is 
%what is % RMC Referee 2 objects to "what is"
can be
called a \emph{binary} matrix) and have eigenvalues whose negatives~$-\lambda = c$ give periodic orbits under the Mandelbrot iteration, namely $z_k = z_{k-1}^2+c$ with $z_0=0$, and are thus contained in the Mandelbrot set.  By the Perron--Frobenius theorem, the matrices~$\M_n$ have a dominant real positive eigenvalue, which we call~$\rho_n$.  This article examines the eigenvector belonging to that dominant eigenvalue and its fractal-like structure, and similarly examines (with less success) the dominant singular vectors of $\M_n$ from the singular value decomposition.
\end{abstract}

\noindent

\section{Plots, directed graphs, and an epigraph.}
\begin{quote}
\emph{For a construction to be useful and not mere waste of
mental effort, for it to serve as a stepping-stone to higher
things, it must first of all possess a kind of unity enabling
us to see something more than the juxtaposition
of its elements}.
\hfill---Henri Poincar\'e, \emph{Science and Hypothesis}~\cite{poincare1905science}
\end{quote}

\begin{figure}[h!]
    \centering
    \subcaptionbox{An eigenvector $\mat{u}$}{\includegraphics[width=5.5cm]{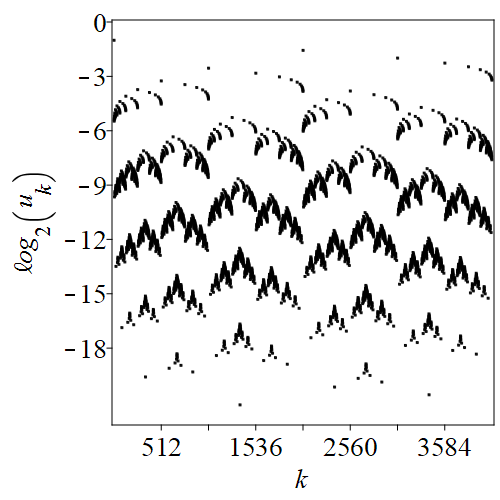}}
    \subcaptionbox{An eigenvector $\mat{v}$}{\includegraphics[width=5.5cm]{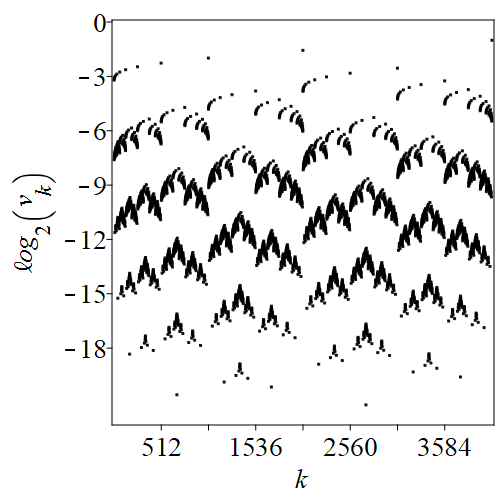}}
    \caption{A discrete plot of the components of eigenvectors corresponding to the (same) dominant eigenvalue of two particular nonnegative integer 
    %(indeed binary, containing only zeros and ones) 
    $4095$-by-$4095$ matrices, graphed on a base-$2$ logarithmic scale.  At this dimension, there are enough components to give the illusion of connected structures, which we seek to understand.  A higher-dimensional plot is shown later, in Figure~\ref{fig:singularvector20}.}
    \label{fig:singularvector12}
\end{figure}

This article seeks to explain a visual curiosity, namely that of Figure~\ref{fig:singularvector12}, where visible structures seem to repeat, slightly transformed, at smaller scales. But what do we mean by an ``explanation?" 
What constitutes a \emph{mathematical} explanation?  By the way, those structures are \emph{not} the result of rounding errors, in spite of our doing the computation only in standard hardware precision floating-point arithmetic.
% RMC Referee 2: lost confidence here!
% Hopefully fixed now.

We will see a connection with the Mandelbrot set.  After seeing the name Mandelbrot get involved, the reader might no longer be surprised that repeating transformed structures occur, because nowadays self-similarity and fractals are familiar features of the mathematical landscape.\footnote{We won't formally define \emph{fractal} here, or pursue the many known facts about the Mandelbrot set. We're going to stick to the finite, and say only that some things that we draw \emph{look like} they might become fractals in the limit as the dimension goes to infinity.}  But can we say more than that, and can we satisfy Poincar\'e's dictum quoted at the start of this section?  We think so.

Let us begin with a recursive  construction of a family of directed graphs (digraphs).  Consider the following digraphs $G_n$ on $d_n=2^{n}-1$ vertices labeled $1, 2, 3, \ldots, d_n$.  For $n=1$ we define the digraph $G_1$ to be just one vertex with one loop; that is, an edge connecting the vertex to itself.  See Figure~\ref{fig:G1}.
\begin{figure}[h!]
    \centering
    \includegraphics[width=4cm]{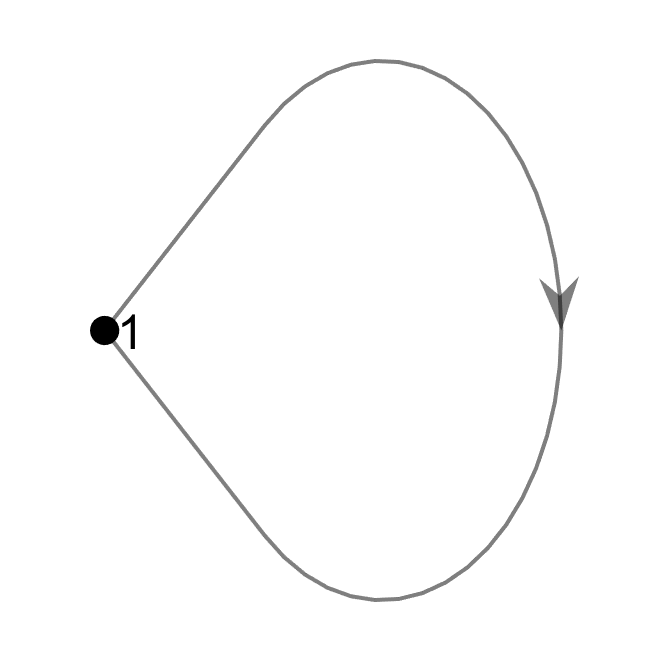}
    \caption{The directed graph $G_1$: just a single loop.}
    \label{fig:G1}
\end{figure}

For $n=2$ we define the digraph $G_2$ to consist of two copies of $G_1$, together with a new vertex between the two copies, and three new edges: two connecting the new vertex to each copy and the third directly connecting the first copy to the second. That is, we make two copies, add a vertex, and connect them all with three new edges. See Figure~\ref{fig:G2}.
\begin{figure}[h!]
    \centering
    \includegraphics[width=5cm]{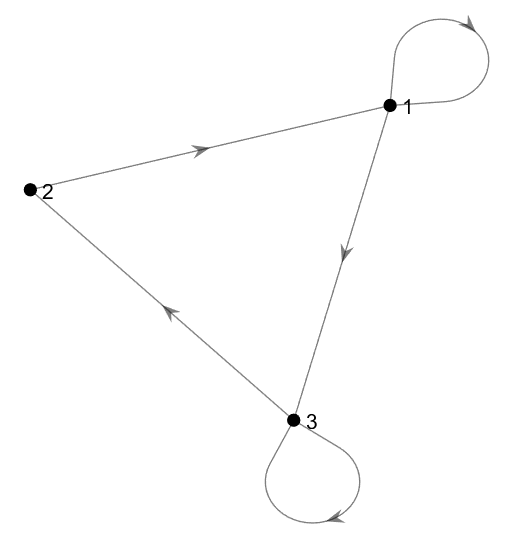}
    \caption{The directed graph $G_2$, drawn with the ``force" model in Matlab: imagine an electrical charge placed at each vertex, repelling all other vertices; and imagine a mechanical spring replacing each edge, pulling connected vertices together.  The pictured configuration is an approximate minimization of the potential energy of this model. At equilibrium in $G_2$, the vertices are equidistant by symmetry. The ``springs" of the two loops have no effect, of course.}
    \label{fig:G2}
\end{figure}

For $n=3$ we repeat the process.  The digraph $G_3$ is defined to consist of two copies of $G_2$, with a new vertex between the two copies and edges connecting the new vertex to each copy and a new edge from the first copy to the second.  Again we have made two copies, added a vertex, and added three edges.  See Figure~\ref{fig:G3}.
\begin{figure}[h!]
    \centering
    \includegraphics[width=6cm]{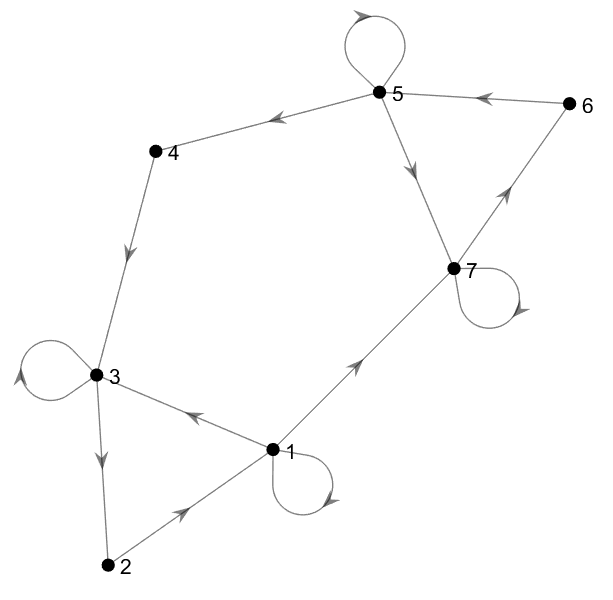}
    \caption{The directed graph $G_3$, containing two copies of $G_2$, drawn with the ``force" model in Matlab.  We see the two copies of $G_2$ with their loops, and the new vertex $4$ between the two copies, and the new edge between vertices $1$ and $7$. We can imagine the effect of the repulsion between vertices owing to the electrical ``charge" being balanced by the pull of the ``springs."}
    \label{fig:G3}
\end{figure}

By now the recursive construction is clear (we will formalize it in Definition~\ref{def:recursivegraph} below), but for thoroughness $G_4$ and $G_5$ are shown in Figure~\ref{fig:G45}.  All of these digraphs are \emph{strongly connected}: that is, there is a \emph{closed walk} along the directed edges that includes all the vertices. Finally, just because we think that the digraphs with lots of vertices are beautiful, we show $G_{12}$ and $G_{13}$ in Figure~\ref{fig:G1213}, with $2^{12}-1 = 4095$ vertices and $2^{13}-1=8191$ vertices, respectively.
\begin{figure}
    \centering
    \subcaptionbox{$G_4$}{\includegraphics[width=6cm]{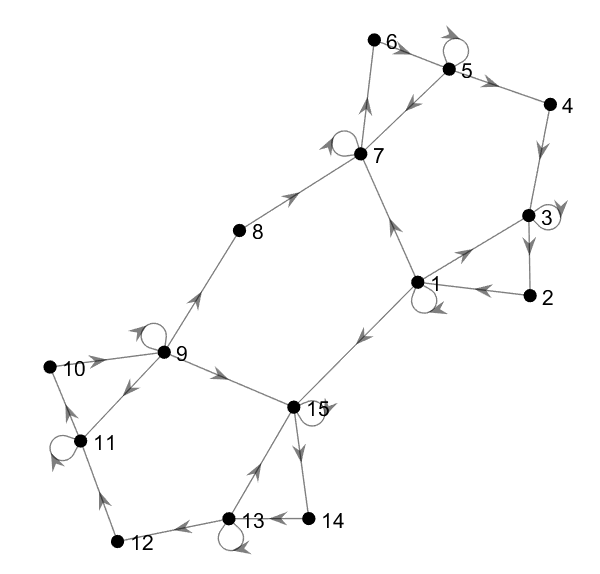}}
    \subcaptionbox{$G_5$}{\includegraphics[width=6cm]{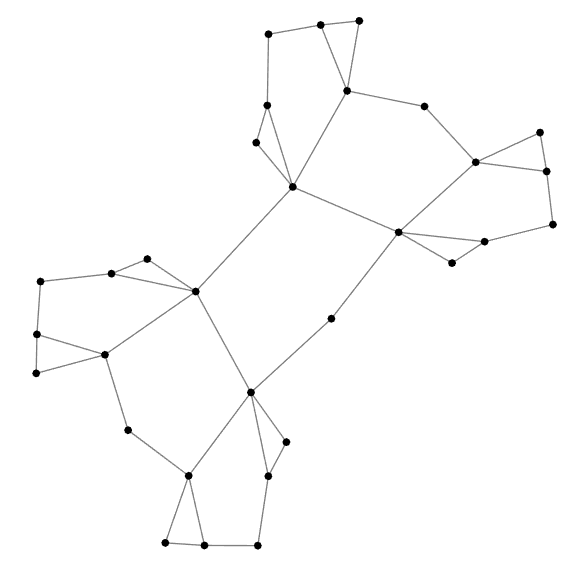}}
    \caption{Force digraphs $G_4$ and $G_5$.  To remove clutter in the larger digraphs, we don't print the arrows or the loops in $G_n$ for $n\ge 5$. The recursive construction should now be clear.}
    \label{fig:G45}
\end{figure}

\begin{figure}
    \centering
    \subcaptionbox{$G_{12}$}{\includegraphics[width=6cm]{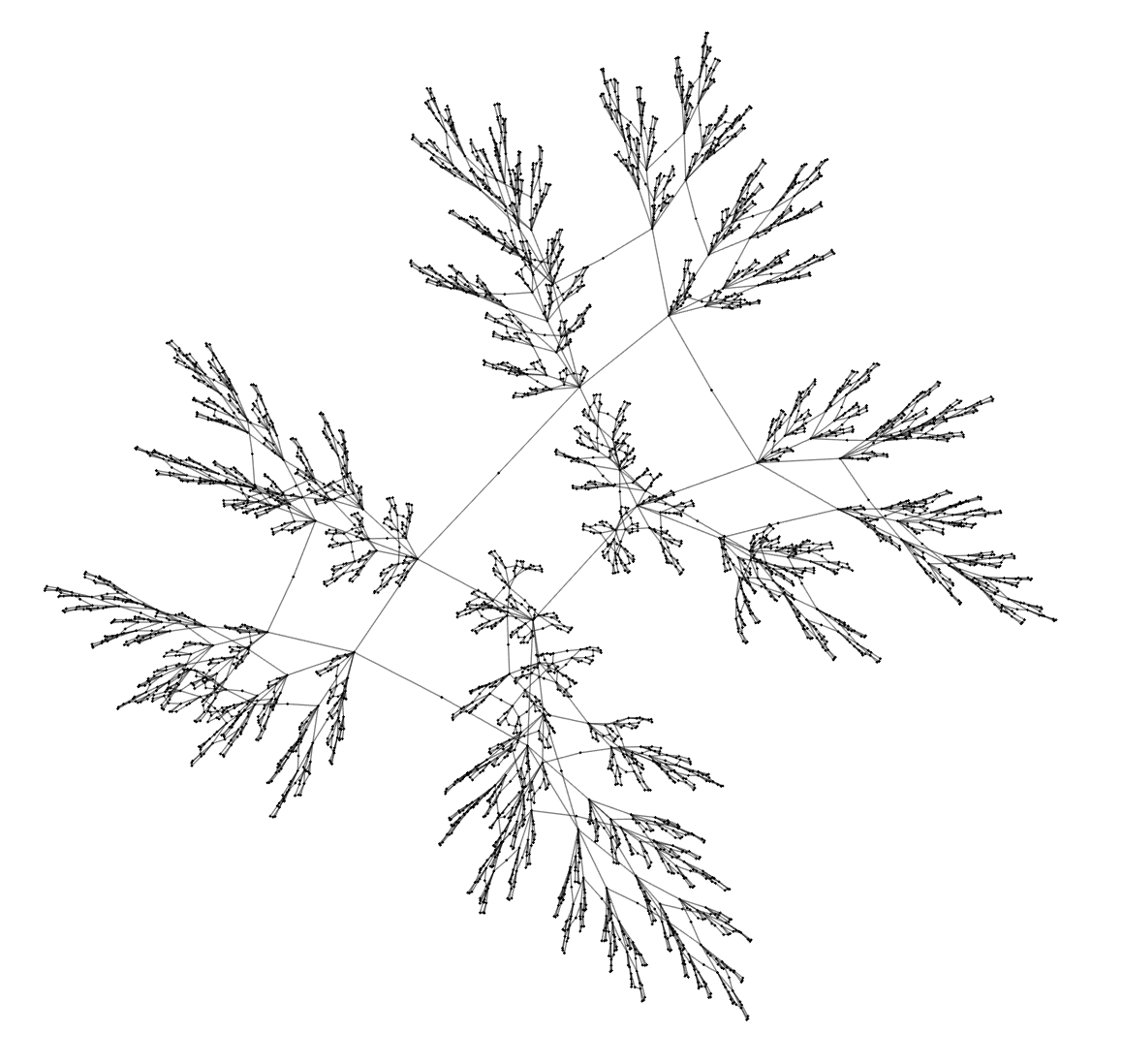}}
    \subcaptionbox{$G_{13}$}{\includegraphics[width=6cm]{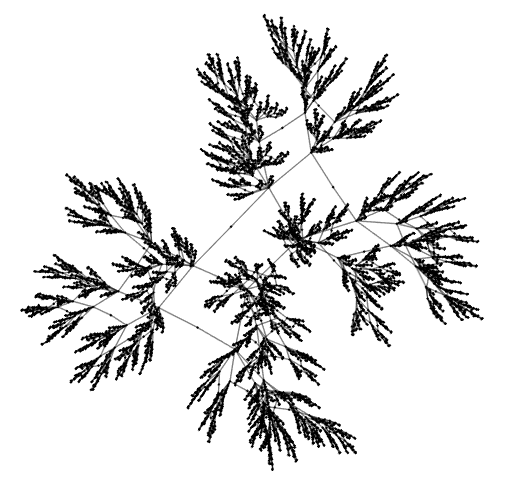}}
    \caption{Force digraphs of $G_{12}$ and $G_{13}$.  The digraph on the right has nearly twice as many vertices and appears darker because of that, but the likeness in shape is evident.}
    \label{fig:G1213}
\end{figure}
\begin{remark}
We use the graph visualization methods which we learned of first  from~\href{https://people.engr.tamu.edu/davis/suitesparse.html}{the SuiteSparse collection of sparse matrices}~\cite{davis2019}; that is, take the graph associated with the matrix and put an attracting spring on every edge, put a repelling charge on every vertex, and look for a minimum energy configuration. 
% See for example~\cite{Ellson03graphvizand} for a  description of this methodology.
We used the implementation in Matlab: for a digraph $G$ issue the command \texttt{plot(G,'Layout','force')}.
\end{remark}
\begin{definition}{}
\label{def:recursivegraph}
$G_1$ is defined as in Figure~\ref{fig:G1}.
%For $n > 1 $ each digraph $G_n$ has vertices labeled 
%$1, 2, \ldots, d_n=2^n-1$.   
To construct $G_n$ for $n>1$, take two copies of $G_{n-1}$ and one new vertex.
Give the number $2^{n-1}$ to the new vertex.  
Keep the numbering on one copy of $G_{n-1}$ the same as it was: $1$ through $2^{n-1}-1$. Renumber the vertices on the other copy to be $2^{n-1}+1$ through $2^n-1$: that is, add $2^{n-1}$ to each vertex number in this copy and renumber all edges $(i,j)$ in this copy of $G_{n-1}$ to become edges $(2^{n-1}+i,2^{n-1}+j)$.  Now add three new edges: the first directed from vertex $1$ to the newly-renumbered vertex $2^n-1$, the second from newly-renumbered vertex $2^{n-1}+1$ to the new vertex $2^{n-1}$, and the third from the new vertex $2^{n-1}$ to vertex $2^{n-1}-1$.
Call the resulting graph $G_n$.
\end{definition}
% RMC Referee 2 did not like the definition
% so I rewrote it.  It is clearer.
\begin{proposition}
$G_n$ is strongly connected.
\end{proposition}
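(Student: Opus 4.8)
The plan is a straightforward induction on $n$ that mirrors the recursive structure of Definition~\ref{def:recursivegraph}. The base case is immediate: $G_1$ is a single vertex with a loop, which is trivially strongly connected. For the inductive step I assume $G_{n-1}$ is strongly connected and deduce the same for $G_n$.

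Two elementary reformulations make the bookkeeping painless. First, a finite digraph is strongly connected if and only if it has a \emph{closed spanning walk}, that is, a closed directed walk visiting every vertex: given such a walk, one reaches any vertex from any other by following it, wrapping around at most once, and the converse direction is even easier. Second, in a strongly connected digraph, for \emph{any} ordered pair of vertices $a,b$ there is a directed walk from $a$ to $b$ that visits all vertices; just list the vertices as $a = v_1, v_2, \ldots, v_k = b$ and concatenate directed walks from each $v_i$ to $v_{i+1}$, each of which exists by strong connectivity.

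It then suffices to exhibit a closed spanning walk of $G_n$, assembled from the ingredients named in Definition~\ref{def:recursivegraph}. Write $m = 2^{n-1}$ for the new vertex; the ``left'' copy of $G_{n-1}$ occupies vertices $1,\ldots,m-1$, the ``right'' copy occupies $m+1,\ldots,2m-1$, and the three new edges are $m\to m-1$, $1\to 2m-1$, and $m+1\to m$. Starting at $m$: traverse $m\to m-1$ into the left copy; by the inductive hypothesis applied to that copy, follow a walk from $m-1$ to $1$ that visits every left-copy vertex; traverse $1\to 2m-1$ into the right copy; by the inductive hypothesis applied to that copy, follow a walk from $2m-1$ to $m+1$ that visits every right-copy vertex; finally traverse $m+1\to m$ back to the start. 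This closed walk visits $m$ together with all $2m-2$ vertices of the two copies, hence all $2^n-1$ vertices of $G_n$; so $G_n$ is strongly connected and the induction is complete.

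I do not expect any real obstacle; the single point worth a moment's attention is that the three new edges carry exactly the orientations needed for the stitching above --- into the left copy at $m-1$, out of it at $1$, into the right copy at $2m-1$, out of it at $m+1$ --- which is precisely what Definition~\ref{def:recursivegraph} specifies. One could instead argue at the level of the matrix $\M_n$, since strong connectivity of the digraph is equivalent to irreducibility of the associated nonnegative matrix, but the induction reads most cleanly for the digraphs themselves.
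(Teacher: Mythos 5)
Your proposal is correct and follows essentially the same route as the paper: an induction that stitches a spanning closed walk through the two copies of $G_{n-1}$ using the three new edges, exactly the cycle the paper traces (just starting at a different vertex). The extra care you take in noting that strong connectivity yields, within each copy, a walk between prescribed endpoints visiting all its vertices is a detail the paper leaves implicit.
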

\begin{proof}
By construction, from vertex $1$ we may travel directly to vertex $2^n-1$. Inductively we may travel from vertex $2^n-1$ to $2^{n-1}+1$; by construction from there through vertex $2^{n-1}$ to $2^{n-1}-1$; and inductively from there to vertex $1$.
\end{proof}

\section{The adjacency matrices.}
The \emph{adjacency matrix} $\M$ of a directed graph $G$ with $d$ nodes is a $d$-by-$d$ matrix with entry $M_{i,j}=1$ if there is an edge from vertex $i$ to vertex $j$, and is zero otherwise.
Define $\M_n$ to be the adjacency matrix for $G_n$. For reasons that we will explain soon, we will call them \emph{Mandelbrot matrices}. Explicitly, put 
\begin{equation}
    \M_1 = \begin{bmatrix} 1 \end{bmatrix}\>.
\end{equation}
This is the adjacency matrix for the digraph $G_1$: the matrix contains a $1$ in its $(1,1)$ entry because there is an edge connecting vertex $1$ to itself, i.e., a loop.

We then put 
\begin{equation}
    \M_2 = \begin{bmatrix} \M_1 & 0 & \textcolor{red}{1} \\
    \textcolor{red}{1} & 0 & 0 \\
    0 & \textcolor{red}{1} & \M_1\end{bmatrix}\>.
\end{equation}
% RMC Referee 2 reflected-->situated
This is the adjacency matrix for $G_2$: we have a copy of $G_1$ situated in the upper left corner and another in the lower right corner; we have a new vertex (numbered $2$, in between the copies at $1$ and at $3$) and three new edges (red entries in the matrix) connecting vertex $1$ to vertex $3$, vertex $2$ to vertex $1$, and vertex $3$ to vertex $2$.

Proceeding in a similar fashion to construct $\M_3$, but this time explicitly showing the copies of $\M_2$ in the outlined blocks:
\begin{equation}
    \M_3 = \left[\begin{array}{ccccccc} 
    \cdashline{1-3}
    \multicolumn{1}{:c}{1} & 0 & \multicolumn{1}{c:}{1} & 0 & 0 & 0 & \textcolor{red}{1} \\
    \multicolumn{1}{:c}{1} & 0 & \multicolumn{1}{c:}{0} & 0 & 0 & 0 & 0 \\
    \multicolumn{1}{:c}{0} & 1 & \multicolumn{1}{c:}{1} & 0 & 0 & 0 & 0 \\
    \cdashline{1-3}
    \strut 0 & 0 & \textcolor{red}{1} & 0 & 0 & 0 & 0 \\
    \cdashline{5-7}
    0 & 0 & 0 & \textcolor{red}{1} & \multicolumn{1}{:c}{1} & 0 & \multicolumn{1}{c:}{1}  \\
    0 & 0 & 0 & 0 & \multicolumn{1}{:c}{1} & 0 & \multicolumn{1}{c:}{0} \\
    0 & 0 & 0 & 0 & \multicolumn{1}{:c}{0} & 1 & \multicolumn{1}{c:}{1} \\
    \cdashline{5-7}
    \end{array}\right]\>.
\end{equation}
In general, if $e_1 = [1, 0, 0, \ldots, 0 ]^T$ is the leading elementary column vector of dimension $d_n = 2^n-1$ and $e_{d_n}$ is the final elementary column vector of the same dimension, then we may construct $\M_{n+1}$ from two copies of $\M_n$ in the following way.
% RMC Referee 2 did not like elementary dimension
\begin{definition}{Mandelbrot matrices.}
\label{def:MandelbrotMatrices}
The Mandelbrot matrix $\M_1$ is defined as above, namely the $1$-by-$1$ matrix $\M_1=[1]$.
For $n \ge 1$,
\begin{equation}\label{eq:Mmatrix_recurrence}
    \M_{n+1} = \begin{bmatrix}
    \M_n & \mat{0} & \textcolor{red}{\mat{e_1}\mat{e_{d_n}}^T} \\
    \textcolor{red}{\mat{e_{d_n}}^T} & 0 & \mat{0} \\
    \mat{0} & \textcolor{red}{\mat{e_1}} & \M_n 
    \end{bmatrix}\>.
\end{equation}
\end{definition}
We have the following facts, which we present without proof:
\begin{enumerate}
\item The matrix $\M_{n+1}$ has dimension $d_{n+1} = 2d_n + 1 = 2^{n+1}-1$. 
\item $\det\M_n = 1$ for all $n \ge 1$.
\item $\M_n$ is the adjacency matrix for $G_{n}$ for $n \ge 1$.
\item The matrices $\M_n$ are all 
%what is known as RMC Referee 2 objects to "what is"
``unit upper Hessenberg'': that is, they are upper triangular, except that the principal subdiagonal is also nonzero and contains only $1$s.
\item Since there is a \emph{walk} or directed path in $G_n$ that contains all vertices, i.e.,~a complete circuit, the graph is % what is called RMC Referee 2 objects to "what is" 
\emph{strongly connected} and the adjacency matrices $\M_n$ are 
%what is called RMC Referee 2 objects to "what is"
\emph{irreducible}~\cite[Chapter~40]{hogbenhandbook}.  
%A linear algebraist who does not use graph theory much would know that 
% RMC Referee 2 objects to us imagining
% what a linear algebraist knows or 
% doesn't know.
% One can also deduce that $\M_n$ is irreducible 
% because it is unit upper Hessenberg; the key fact is that the subdiagonal contains no zeros.  
%These two uses of the word convey the same information: the eigenvalue problem for the 
%An irreducible matrix cannot be simply split into two smaller eigenvalue problems.
\item The \emph{period} $h$ of $\M_n$ is defined to be the greatest common divisor (GCD) of the length of all cycles in $G_n$; here this is $h=1$.
\item $\|\M_n\|_1 = \|\M_n\|_\infty = n$.
\item $\|\M_n^{-1}\|_1 = \|\M_n^{-1}\|_\infty = 2n-1$.
\item The number of nonzero entries in $\M_n$ is $2d_n-1$.
\end{enumerate}

The Mandelbrot matrices are defined differently in some works, e.g.,~in~\cite{bini2000design,bini2014solving,CHAN2019373}, so that their characteristic polynomials $p_n(\lambda)$ satisfy $p_0(\lambda)=0$ and the recurrence relation
\begin{equation}
    p_{n+1}(\lambda) = \lambda p_n^2(\lambda) + 1\>. \label{eq:Mandelbrotpolynomials}
\end{equation}
This recurrence relation is a transformation of Mandelbrot's fundamental recurrence $z_{n+1} = z_n^2 + c$; divide that fundamental recurrence by $c$ and put $p_n = z_n/c$ and rename $c$ to be $\lambda$. Zeros of these polynomials (and therefore eigenvalues of the Mandelbrot matrices) give \emph{periodic} points in---centers of hyperbolic components of---the Mandelbrot set.  See Figure~\ref{fig:Mandelbrotgraphs}. These eigenvalues are known to be all \emph{simple}: see~\cite{Morton1994polymap,schleicher2017internal,vivaldi1992ratmap}.

One inconvenience of that alternative definition of the Mandelbrot matrices is that it entails that the entries of each so-defined Mandelbrot matrix are either $0$
or $-1$. In order to minimize minus signs, we changed the definition $\M_n$ so that its entries are either $0$ or $1$---that is, so that $\M_n$ is 
%what is called  RMC Referee 2 objects to "what is"
a \emph{binary matrix}. This has the consequence that the Mandelbrot polynomials as defined above are related to $\det(\lambda \mat{I} + \M_n)$. This \emph{matrix pencil}\footnote{A \emph{matrix pencil} involving the pair of (usually square) matrices $(\mat{A},\mat{B})$ is the linear matrix polynomial $\lambda\mat{B} + \mat{A}$.} has the opposite sign to the usual definition of a characteristic polynomial of a matrix $\mat{A}$, namely $\det(\lambda\mat{I}-\mat{A})$.  

Another difference in our definition here is that the prior definition indexes from $0$. This makes $p_1 = 1$, which has no zeros; this would correspond to the empty matrix,
% RMC going against the editorial which/that
% Here we need "which"!  But we needed a 
% comma, first.
which has no eigenvalues.  Instead, we index from $1$, here.  
In~\cite{corless2013largest} the elements in the matrices are all nonpositive, but the indexing is as here.
%This change encourages off-by-one errors in translation, unfortunately. 
% RMC Referee 2 did not like off-by-one errors
The reason that the other indexing convention is used is so that zeros of $p_n(z)$ give rise to points of period $n$ in the Mandelbrot iteration.  Here, we do not need this, and there are several favorable consequences: for instance, the maximum degree of $G_n$ is $n$, and this means the maximum row sum of $\M_n$ is $n$.  Since the notational ambiguity is already in the literature, we feel required to warn the reader; and we feel entitled to use the most convenient notation here.

\begin{proposition}
The matrices 
$\M_n$ 
as defined 
above satisfy 
$\det(\lambda\mat{I}+\M_n) = p_{n+1}(\lambda)$, where 
$p_n(\lambda)$ 
are the Mandelbrot polynomials defined in equation~\eqref{eq:Mandelbrotpolynomials}.
\end{proposition}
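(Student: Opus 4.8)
The plan is to induct on $n$, using the block recurrence~\eqref{eq:Mmatrix_recurrence} to express $\det(\lambda\mat{I}+\M_{n+1})$ in terms of $\det(\lambda\mat{I}+\M_n)$, and then invoking the Mandelbrot recurrence~\eqref{eq:Mandelbrotpolynomials}. The base case $n=1$ is the one-line check $\det(\lambda\mat{I}+\M_1)=\lambda+1=\lambda\cdot 1^2+1=p_2(\lambda)$, since $p_1(\lambda)=\lambda p_0^2+1=1$.

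For the inductive step, abbreviate $d=d_n$, set $\A=\lambda\mat{I}+\M_n$ (which is $d\times d$), and let $\mat{e}_1$ and $\mat{e}_d$ be the first and last standard basis vectors in dimension $d$. The recurrence~\eqref{eq:Mmatrix_recurrence} gives
\[ \lambda\mat{I}+\M_{n+1}=\begin{bmatrix}\A & \mat{0} & \mat{e}_1\mat{e}_d^T\\ \mat{e}_d^T & \lambda & \mat{0}\\ \mat{0} & \mat{e}_1 & \A\end{bmatrix}. \]
I would read this as a $2\times 2$ block matrix whose lower-right block is the (generically invertible) matrix $\A$, and evaluate the determinant with the Schur-complement formula: the determinant equals $\det\A$ times the determinant of the Schur complement of that block. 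The two copies of $\A$ communicate only through the single scalar $\alpha:=\mat{e}_d^T\A^{-1}\mat{e}_1=(\A^{-1})_{d,1}$, and a short computation shows the Schur complement is the $(d+1)\times(d+1)$ matrix with $\A$ in the upper left, the column $-\alpha\mat{e}_1$ in the upper right, the row $\mat{e}_d^T$ in the lower left, and $\lambda$ in the lower right. Applying the Schur-complement formula again (this time pivoting on the $\A$ block) gives
\[ \det(\lambda\mat{I}+\M_{n+1})=(\det\A)^2\bigl(\lambda+\alpha^2\bigr)=\lambda\,(\det\A)^2+\alpha^2(\det\A)^2 . \]

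It remains to show that $\alpha^2(\det\A)^2=1$, and here I would use Cramer's rule. Since $\A^{-1}=(\det\A)^{-1}\mathrm{adj}\,\A$, the quantity $\alpha\det\A$ is the $(1,d)$ cofactor of $\A$, that is, $(-1)^{1+d}$ times the minor of $\A$ obtained by deleting row $1$ and column $d$. That minor is the determinant of the submatrix of $\A$ lying in rows $2,\ldots,d$ and columns $1,\ldots,d-1$; because $\M_n$, and hence $\A=\lambda\mat{I}+\M_n$, is unit upper Hessenberg (as recorded in the list of facts above---adding $\lambda\mat{I}$ changes only the main diagonal), this submatrix is upper triangular with every diagonal entry equal to one of the subdiagonal $1$'s of $\A$, so its determinant is $1$. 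Thus $\alpha\det\A=(-1)^{1+d}$, the sign disappears upon squaring, and $\det(\lambda\mat{I}+\M_{n+1})=\lambda\,(\det\A)^2+1$. Feeding in the inductive hypothesis $\det\A=p_{n+1}(\lambda)$ and comparing with~\eqref{eq:Mandelbrotpolynomials} yields $\det(\lambda\mat{I}+\M_{n+1})=p_{n+2}(\lambda)$, which closes the induction.

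The only delicate point is the use of $\A^{-1}$: the Schur-complement steps are valid as written only where $\det\A\neq 0$, but both sides of the claimed identity are polynomials in $\lambda$ and $\det\A=p_{n+1}(\lambda)$ is not the zero polynomial, so an identity of rational functions holding off the finitely many zeros of $p_{n+1}$ is in fact a polynomial identity. (Alternatively one can expand $\det(\lambda\mat{I}+\M_{n+1})$ by cofactors directly along the ``new vertex'' row or column, but tracking the surviving minors is more laborious than the Schur-complement bookkeeping above; in that approach the degenerate case $d=1$, where the triangular submatrix is empty with determinant $1$ by convention, also deserves a separate glance.)
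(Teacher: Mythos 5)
Your proof is correct, and it reaches the paper's key recurrence $\det(\lambda\mat{I}+\M_{n+1})=\lambda\bigl(\det(\lambda\mat{I}+\M_n)\bigr)^2+1$ by a genuinely different mechanism. The paper does not use Schur complements at all: it exploits the linearity of the determinant in the first row of $\lambda\mat{I}+\M_{n+1}$, splitting that row into the first row of the block-lower-triangular part (whose determinant is $\lambda\,\det(\lambda\mat{I}+\M_n)^2$) plus the single entry coming from the new edge in the last column; the latter determinant is $(-1)^{d_{n+1}-1}$ times an upper-triangular minor with unit diagonal, hence $1$, by the unit-Hessenberg structure of $\M_{n+1}$. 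Your route instead pivots twice on the $\A$ blocks, reduces everything to the scalar $\alpha=(\A^{-1})_{d,1}$, and then uses the adjugate to show $\alpha\det\A=\pm1$ --- which is the same unit-Hessenberg triangular-minor fact, only applied to $\M_n$ (dimension $d_n-1$) rather than to $\M_{n+1}$ (dimension $d_{n+1}-1$). What the paper's multilinearity argument buys is brevity and the absence of any division: no generic-invertibility caveat or rational-function-to-polynomial-identity step is needed (nor the empty-minor convention at $d=1$), and the whole inductive step is one sentence. What your argument buys is an explicit accounting of how the two copies of $\A$ interact, namely through the single scalar $\alpha=\pm1/\det\A$; this makes the corner entry of $(\lambda\mat{I}+\M_n)^{-1}$ visible, a quantity of the same flavor as the relation $C_n(\rho_{n+1})=1/\sqrt{\rho_{n+1}}$ exploited later in the paper. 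Your handling of the invertibility issue (a rational identity off the finitely many zeros of $p_{n+1}$ extends to a polynomial identity) is correctly stated, and your base case matches the paper's.
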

\begin{proof}
%(Proof by Donald E.~Knuth, on a small scrap of paper at SIAM 2016): 
% We'll put that in the acknowledgements
% once the paper has passed review
The determinant function is linear in the entries of the first row.  Therefore the determinant of $\lambda\mat{I} + \M_{n+1}$ is the sum of the determinant of a block lower-triangular matrix with three blocks, namely $\lambda\mat{I}+\M_n$, $\lambda$, and $\lambda\mat{I}+\M_n$ again, and $(-1)^{d_{n+1}-1} = 1$ times the determinant of an upper-triangular matrix  of dimension $d_{n+1}-1$ with ones on the diagonal (because $\M_{n+1}$ is upper Hessenberg with unit subdiagonal). Since the case $n=1$ gives $p_2(\lambda) = \lambda + 1$, the theorem follows by induction. 
\end{proof}
\begin{remark}
To ease reading about and working with these matrices, we define the \emph{characteristic polynomials} $C_n(\lambda) = \det(\lambda \mat{I} - \M_n)$ with the proper signs.  By inspection, we have $C_n(\lambda) = - p_{n+1}(-\lambda)$ because the degrees are always odd: $d_n = 2^n-1$. The recurrence relation that $C_n$ satisfies is  $C_{n+1} = \lambda C_n^2 - 1$ with $C_0=1$.
\end{remark}
\begin{figure}[h!]
    \centering
    \subcaptionbox{$n=6$\label{fig:a}}{\includegraphics[width=5.5cm]{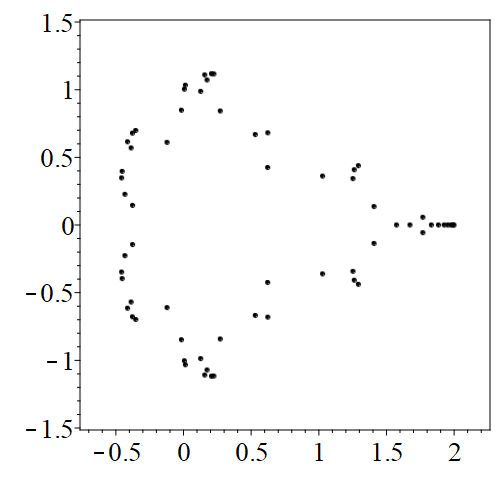}}
    \subcaptionbox{$n=12$\label{fig:b}}{\includegraphics[width=5.5cm]{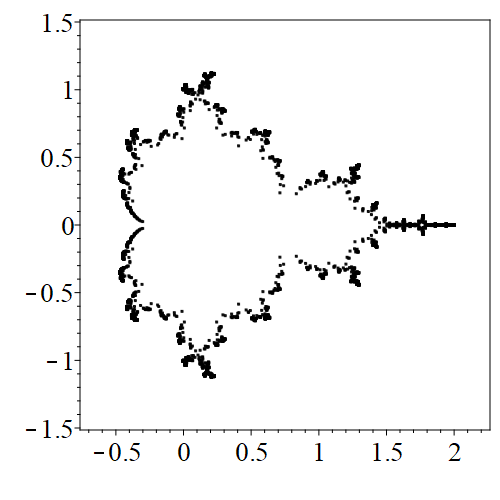}}
    \caption{The eigenvalues of $\M_6$, which are the roots of $C_6(\lambda)$ (Figure~\ref{fig:a}, degree $63$) and the eigenvalues of $\M_{12}$, which are the roots of $C_{12}(\lambda)$ (Figure~\ref{fig:b}, degree $4095$).  The negatives of these points are periodic points in the Mandelbrot set. This is why the name ``Mandelbrot polynomials'' is given to $p_{n+1}(\lambda) = -C_n(-\lambda)$ and why we call the matrices ``Mandelbrot matrices."}
    \label{fig:Mandelbrotgraphs}
\end{figure}

\section{The dominant eigenvalues.}
\begin{quote}
\emph{The eigenvalue of largest absolute value of a positive (square) matrix A is both
simple and positive and belongs to a positive eigenvector. All other eigenvalues are
smaller in absolute value}. \hfill
---O.~Perron, as quoted in~\cite{maccluer2000many}
\end{quote}

A matrix or vector is called \emph{positive} if all its entries are positive.  A matrix is called \emph{nonnegative} if all its nonzero entries are positive. Positive matrices have a dominant eigenvalue $\rho$ which is also positive, by the Perron--Frobenius theorem. 
% RMC Referee 2 didn't like clearly a limit
A nonnegative matrix can be taken as a limit of a set of positive matrices and therefore its eigenvalue of largest absolute value, $\rho$, is also nonnegative and the eigenvector belonging to it is nonnegative; however, there may be other eigenvalues of equal (largest) magnitude.

In the case in which the matrices are 
%what is called RMC Referee 2 objects to "what is" 
\emph{irreducible}, more can be said.  In such a case, the \emph{period} $h$ of the matrix is defined to be the GCD
of the lengths of all the circuits in the digraph associated with the matrix.  Then those other equally largest magnitude eigenvalues must be of the form $\exp(2\pi i/h)\rho$.

Since our matrices $\M_n$ are nonnegative, and since the digraphs $G_n$ associated with the matrices are strongly connected (which implies the matrices $\M_n$ are irreducible) the Perron--Frobenius theory applies.  Since there are cycles of length $1$, we see that the period $h$ as defined in Definition~\ref{def:MandelbrotMatrices} is just $1$ and therefore the largest eigenvalue is in fact unique.  See~\cite{maccluer2000many} for several proofs of Perron's theorem, which states that an irreducible nonnegative matrix has a single, simple, positive, largest real eigenvalue, denoted $\rho$.  %This dominant eigenvalue is often denoted $\rho$, because this symbol often denotes the \emph{spectral radius}.  
The dominant eigenvalue of $\M_n$, which we will call $\rho_n$, has been found in~\cite{corless2013largest} to have the asymptotic expansion, valid as $n\to \infty$,
\begin{equation}
    \rho_n = 2 - \frac{3}{8}\pi^2 4^{-n} + \widetilde{O}(4^{-2n})\>.\label{eq:asymptrho}
\end{equation}
Here the ``soft-Oh'' notation $\widetilde{O}(g(n))$ is shorthand for $O(g(n)\log^m g(n))$ for some fixed $m$.  For instance, $n\cdot 4^{-2n}$ and $n^{10} 4^{-2n}$ are both $\widetilde{O}(4^{-2n})$.
% RMC Referee 2 didn't like soft O
In this article we are 
concerned not with the eigenvalue, but rather
with the eigenvector belonging to it. For our purposes, a more accurate $\rho_n$ can be found by simple Newton iteration on the recurrence relation\footnote{As detailed in~\cite{chan2016comparison} the coefficients of the expanded polynomial grow \emph{doubly exponentially} with $n$, and it is a bad idea numerically to do that expansion before trying to find roots.  As a further benefit, using the recurrence relation instead takes only $O(n)$ operations, whereas evaluating the polynomial any standard way would require $O(d_n)$ operations, i.e., exponentially greater cost.  There are fewer rounding errors, too.} $C_{k+1}(z) = z C_k^2(z)-1$ with $C_0(z)=1$, so of course $C_{k+1}'(z) = C_k^2(z) + 2zC_k(z)C_k'(z)$ can be computed simultaneously. It is interesting to note as the authors of~\cite{corless2013largest} do that, because the derivatives of $C_k(z)$ are so large, starting with just $\rho_n \approx 2$ is \emph{not good enough for convergence}, and one must use the starting estimate from equation~\eqref{eq:asymptrho}.
For instance, for $n=7$ the asymptotic estimate gives $\rho_7 \doteq 1.999774\textcolor{red}{10268247}$, and two Newton iterations achieve full double-precision accuracy at $\rho_7=1.99977404869373$; comparison shows the red digits were wrong.
% RMC Referee 2 thought we might be so
% silly as to use the monomial basis here.

This treatment works perfectly, though, and \emph{we may regard the dominant eigenvalue as known to full accuracy}.  Computing the eigenvalue is usually the hard part, but not here.  We may now continue with the \emph{eigenvector}.

The Perron--Frobenius theory states that each entry of the eigenvector belonging to $\rho_n$ can \emph{also} be taken to be nonnegative. We will see that, in practice, all entries are in fact positive.

To compute the eigenvector once the eigenvalue $\rho_n$ is known, we do the simplest thing imaginable: we put $x_{d_n}=1$ and $\hat{\mat{x}} = [x_1, x_2, \ldots, x_{d_n-1}]^T$ and solve the (very sparse) triangular system $\mat{T}_n\hat{\mat{x}} = -x_{d_n}\mat{b} $ that arises from looking for the null vector of $\mat{R}(\rho_n) = (\M_n - \rho_n\mat{I})$. Delete the first row of $\mat{R}$ and call the result $\widetilde{\mat{R}}$. The vector $\mat{b}$ is the last column of $\widetilde{\mat{R}}$ and the upper triangular matrix $\mat{T}_n$ comprises the first $d_n-1$ columns of $\widetilde{\mat{R}}$.

For the \emph{eigenvector} problem, the accurate computation of the dominant eigenvector of $\M_n$ therefore costs only $O(d_n)$ arithmetic operations.\footnote{At precision higher than offered by hardware, the bit complexity becomes relevant---not just the number of operations themselves---because the cost of each arithmetic operation increases if the precision is increased.
Since the numerical condition of a generic eigenvalue problem is expected to grow like $O(d_n^2)$, and does so in this case, one expects to have to use greater than double precision if $d_n > 10^8$, which occurs if $n > 26$.
} For a dense upper Hessenberg matrix the cost would instead be $O(d_n^2)$. 
Why is the computation so cheap?
Basically, because the matrix $\M_n$ is so sparse (it has only $2d_n-1$ nonzero entries). 

% Computation of the dominant eigenvalue itself is also inexpensive, because we know an asymptotic expansion for it, and we can improve that estimate using Newton's method on the recurrence relation $C_n(z) = zC_{n-1}^2(z)-1$, which because it only involves $O(n)$ operations, i.e., $O(\log d_n)$ operations, is both inexpensive and numerically stable.  
Because the recurrence relation for the characteristic polynomial is so economical, this technique is cheaper than the more general technique for \emph{quasiseparable} matrices discussed in~\cite{Eidelman2005}, which also uses Newton's method on the characteristic polynomial.
% We believe that Mandelbrot matrices are ``$(1,n/2)$-quasiseparable" in the terminology of that paper and so their algorithm could be used on the full eigenproblem; of course we are concerned here with only one eigenvalue, which we already know well.

\section{``Fractal'' eigenvectors.}
We begin with pictorial representations of these eigenvectors, by plotting the components $x_k$ against their index, $k$.  The eigenvector of $\M_1$ is trivial, being just a single dot: when $k=1$, $x_k = 1$.  This needs little comment.  So let us consider instead $\M_2$.  We choose to normalize the eigenvector by taking $x_d = 1$, and denote it as $[x_1, x_2, 1]^T$. We then have
\begin{equation}
    \begin{bmatrix}
    1 & 0 & 1 \\
    1 & 0 & 0 \\
    0 & 1 & 1
    \end{bmatrix}\begin{bmatrix}
    x_1 \\
    x_2 \\
    1
    \end{bmatrix}
    = \rho_2 \begin{bmatrix}
    x_1 \\
    x_2 \\
    1
    \end{bmatrix}\>.
\end{equation}
This gives $x_2 = \rho_2-1$ from the third equation and $x_1 = \rho_2(\rho_2-1)$ from the second; the remaining equation simply gives (of course) the characteristic polynomial that $\lambda=\rho_2$ has to satisfy, namely $C_2(\lambda) = \lambda^3 - 2\lambda^2 + \lambda - 1 = 0$.  We plot this eigenvector in Figure~\ref{fig:logeigenvector2}.

% For definiteness, we compute $\rho_2$ exactly: to write it neatly, put $\alpha = (100+12\sqrt{69})^{1/3}$ and then $\rho_2 = 2(1+1/\alpha)/3 + \alpha/6 \approx 1.7549$.
% Incidentally, the asymptotic formula from equation~\eqref{eq:asymptrho} gives $1.76868\ldots$, which shows that the asymptotic formula is already reasonably accurate at $n=2$. 

It will turn out to be convenient to normalize these eigenvectors by $x_d = 1$ in analytic computation; however, for visual presentation when there are many components, the plots turn out to be more intelligible if instead we choose $x_1 = 1$.  We do this in all of Figures~\ref{fig:logeigenvector2}--\ref{fig:logeigenvector1314}. This means there is always a component plotted in the upper left corner.
\begin{figure}
    \centering
    \includegraphics[width=5cm]{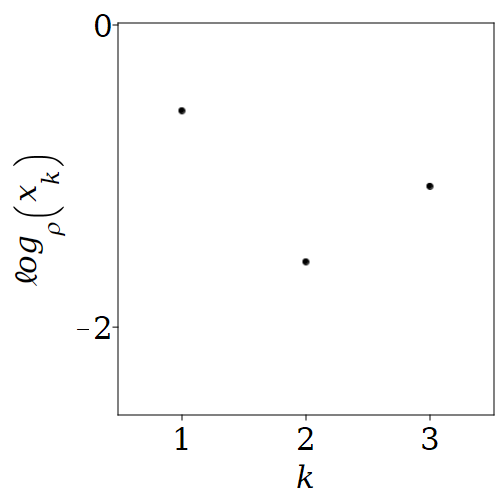}
    \caption{A semilog plot of the eigenvector components in the case $n=2$ (dimension $d_n=2^n-1=3$) of the dominant eigenvector of $\M_n$ (normalized to have $x_1=1$). At this dimension it does not seem useful to make a discrete plot of the components of the eigenvector.  Such plots \emph{are} made in vibration studies, where the eigenvector gives the so-called ``mode shape.''  
    %RMC Referee 2 objects to "what is"
    Here the purpose will only become clear as we increase the dimension.}
    \label{fig:logeigenvector2}
\end{figure}
\begin{figure}
    \centering
    \subcaptionbox{$n=3$}{\includegraphics[width=5cm]{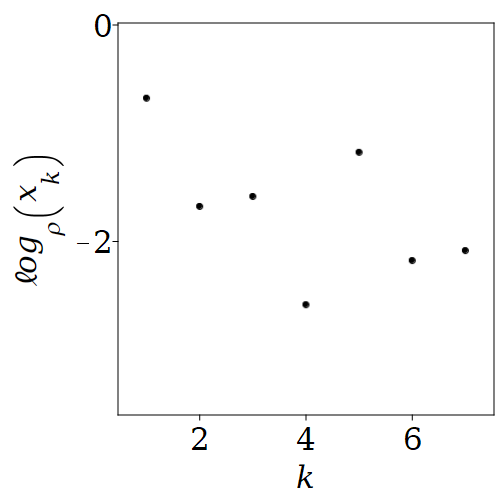}}
    \subcaptionbox{$n=4$}{\includegraphics[width=5cm]{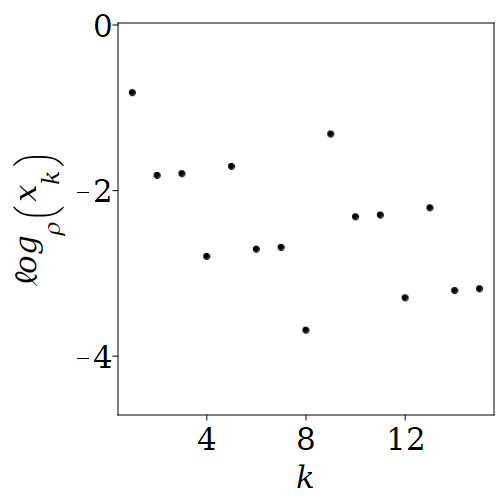}}
    \caption{A semilog plot of the eigenvector components in the cases
    $n=3$ (dimension $d_n=2^n-1=7$) and $n=4$ (dimension $d_n=2^n-1=15$) of the dominant eigenvector of $\M_n$. The largest component is $x_1 = 1$ (upper left corner). Notice the curious symmetry of the final half of the eigenvector components compared to the first half: ignoring the component exactly in the middle, the first half is a copy of the second, but scaled upwards slightly. }
    \label{fig:logeigenvector34}
\end{figure}
\begin{figure}
    \centering
    \subcaptionbox{$n=13$}{\includegraphics[width=5cm]{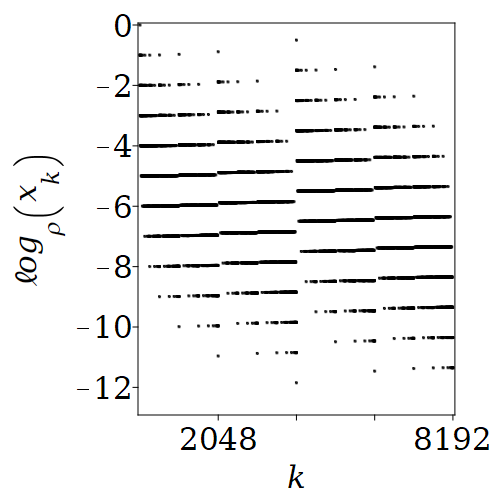}}
    \subcaptionbox{$n=14$}{\includegraphics[width=5cm]{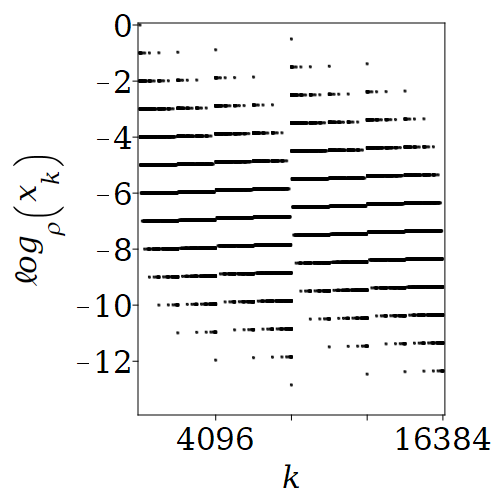}}
    \caption{A semilog plot of the eigenvector components in the cases
    $n=13$ (dimension $d_n=2^n-1=8191$) and $n=14$ (dimension $d_n=2^n-1=16383$) of the dominant eigenvector of $\M_n$. In these figures we normalized so that the largest component is $x_1=1$.  }
    \label{fig:logeigenvector1314}
\end{figure}

\subsection{Maybe the simplest explanation.}
When we look at Figures~\ref{fig:logeigenvector34}--\ref{fig:logeigenvector1314} we see that each eigenvector can be split into (nearly) two halves:  $d_n = 2^n-1$ is odd and so the middle component can be taken to be special. The $2^{n-1}$-dimensional subvectors consisting of the two halves of the remaining elements have a symmetry that, once seen, is striking: the two halves are visually identical, apart from scaling.
Indeed we will prove that there is a single common scaling factor relating the two halves: $x_j = K x_{j+2^{n-2}+1}$ for $1 \le j \le 2^{n-2}$.

Also, there seems to be a significant likeness of the second half of the eigenvector to the full eigenvector of the previous case ($n-1$).  What could explain that?

An important element of the explanation comes from the following observation. Suppose $x(\rho_n)$ (normalized so its \emph{final} entry is $1$) is the eigenvector of $\M_n$ belonging to $\rho_n$.
Each component of $x(\rho_n)$ is a polynomial in $\rho_n$. For instance, when $n=2$ we have
\begin{equation}
    x = \left[ \begin {array}{c} \rho_{{2}} \left( \rho_{{2}}-1 \right) 
\\ \noalign{\medskip}\rho_{{2}}-1\\ \noalign{\medskip}1\end {array}
 \right] \>,
\end{equation}
where $C_2(\rho_2) = \rho_2^3 - 2\rho_2^2+\rho_2 -1 = 0$.
For $n=3$ we have instead (after expanding and factoring the results symbolically\footnote{This is not a sensible thing to do numerically; these explicit expressions for eigenvector components rapidly become numerically unstable, owing to the \emph{doubly} exponential growth of the monomial basis coefficients~\cite{corless2013largest}. We do not use these symbolic expressions for numerical computation.})
% RMC Referee 2 wanted the warning added
\begin{equation}
x =  \left[ \begin {array}{c} {\rho_{{3}}}^{2} \left( \rho_3-1
 \right)  \left( {\rho_{{3}}}^{3}-2\,{\rho_{{3}}}^{2}+\rho_{{3}}-1
 \right) \\ 
 \noalign{\medskip}\rho_{{3}} \left( \rho_3-1 \right) 
 \left( {\rho_{{3}}}^{3}-2\,{\rho_{{3}}}^{2}+\rho_{{3}}-1 \right) 
\\ 
\noalign{\medskip} \rho_{{3}}\left( {\rho_{{3}}}^{3}-2\,{\rho_{{3}}}^{2}+\rho
_{{3}}-1 \right) \\ \noalign{\medskip}{\rho_{{3}}}^{3}-2\,{
\rho_{{3}}}^{2}+\rho_{{3}}-1\\ \noalign{\medskip}\rho_{{3}} \left(
\rho_{{3}}-1 \right) \\ \noalign{\medskip}\rho_3-1
\\ \noalign{\medskip}1\end {array} \right]\>.
\end{equation}
Notice the occurence of $C_2(\rho_3)$ in this vector.  Because $C_3(\rho_3) = \rho_3C_2^2(\rho_3)-1 = 0$, we may write this as $C_2(\rho_3) = 1/\sqrt{\rho_3}$.
Notice also that the final three components are the \emph{same} polynomials as occurred for $n=2$, only now evaluated at $\rho_3$, not $\rho_2$.

This is because
$\M_{n+1}$ has two copies of $\M_n$ in it, and is upper Hessenberg so that we may find the eigenvector by solving a unit upper triangular system. In block form, we have
\begin{equation}
    \begin{bmatrix}
    \M_n & \mat{0} & \mat{e}_1\mat{e}_{d_n}^T \\
    \mat{e}_{d_n}^T & 0 & \mat{0} \\
    \mat{0} & \mat{e}_1 & \M_n
    \end{bmatrix}
    \begin{bmatrix}
    \mat{\Tilde{x}}\\
    u\\
    \mat{x}
    \end{bmatrix} = \rho_{n+1}    \begin{bmatrix}
    \mat{\Tilde{x}}\\
    u\\
    \mat{x}
    \end{bmatrix}\>.\label{eq:blockform}
\end{equation}
\begin{theorem}\label{thm:blockform}
The solution to equation~\eqref{eq:blockform} can be constructed recursively as follows. 
Put $\mat{x}_1(\rho) = [1]$, a one-vector containing a trivial polynomial in $\rho$. Subsequent vectors of dimension $2^{n+1}-1$ are defined by the following polynomial vector recurrence relation: 
\begin{equation}
    \mat{x}_{n+1}(\rho_{n+1}) = \begin{bmatrix}
    \rho_{n+1}C_n(\rho_{n+1})\mat{x}_n(\rho_{n+1}) \\
    C_n(\rho_{n+1})\\
    \mat{x}_n(\rho_{n+1})
    \end{bmatrix}\>. \label{eq:blocksolution}
\end{equation}
\end{theorem}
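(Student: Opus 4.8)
The plan is to prove a slightly stronger, cleaner identity about the polynomial vectors $\mat{x}_n(\rho)$ regarded as formal objects in $\rho$, and only at the very end substitute the actual eigenvalue $\rho_{n+1}$. Specifically, I would show by induction on $n$ that the recursively-defined $\mat{x}_n(\rho)$ satisfies two things at once: (i) its final component is the constant polynomial $1$; and (ii) the \emph{residual identity}
\[
(\M_n - \rho\mat{I})\,\mat{x}_n(\rho) \;=\; -\,C_n(\rho)\,\mat{e}_1 ,
\]
where $\mat{e}_1$ is the leading elementary vector of dimension $d_n = 2^n-1$ and $C_n(\rho) = \det(\rho\mat{I}-\M_n)$ is the characteristic polynomial of the Remark (so $C_0 = 1$, $C_{n+1} = \rho C_n^2 - 1$). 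The base case $n=1$ is a one-line check: $\mat{x}_1 = [1]$, last component $1$, and $(\M_1-\rho)(1) = 1-\rho = -C_1(\rho)$.

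For the inductive step I would write $\M_{n+1}-\rho\mat{I}$ in the bordered block form of equation~\eqref{eq:Mmatrix_recurrence} (its diagonal blocks being $\M_n-\rho\mat{I}$, $-\rho$, $\M_n-\rho\mat{I}$), multiply it into the block vector $\mat{x}_{n+1}(\rho) = [\,\rho C_n\mat{x}_n\,;\ C_n\,;\ \mat{x}_n\,]$ (everything evaluated at $\rho$), and read off the three block components. Using the inductive residual identity in the first copy of $\M_n$ together with inductive hypothesis (i) (the last entry of $\mat{x}_n$ is $1$), the top block collapses to $\rho C_n(-C_n\mat{e}_1) + \mat{e}_1 = (1-\rho C_n^2)\mat{e}_1$; the middle scalar entry collapses to $\rho C_n - \rho C_n = 0$; and the bottom block, again by the inductive residual identity (on the second copy of $\M_n$) plus the new border edge $\mat{e}_1 u$, collapses to $C_n\mat{e}_1 - C_n\mat{e}_1 = \mat{0}$. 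Now the recurrence $C_{n+1} = \rho C_n^2 - 1$ rewrites the top block as $-C_{n+1}\mat{e}_1$, so the whole product equals $-C_{n+1}(\rho)\mat{e}_1$ in dimension $d_{n+1}$ — which is exactly (ii) at level $n+1$ — while (i) at level $n+1$ is immediate because the bottom block of $\mat{x}_{n+1}$ is $\mat{x}_n$. This closes the induction.

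Finally I would specialize to $\rho = \rho_{n+1}$, which is by definition a root of $C_{n+1}$. Then the residual identity reads $(\M_{n+1}-\rho_{n+1}\mat{I})\,\mat{x}_{n+1}(\rho_{n+1}) = \mat{0}$, and $\mat{x}_{n+1}(\rho_{n+1})\neq\mat{0}$ because its last component is $1$; hence it is a genuine eigenvector of $\M_{n+1}$ for $\rho_{n+1}$, and reading off its block structure against equation~\eqref{eq:blockform} identifies $\mat{x} = \mat{x}_n(\rho_{n+1})$, $u = C_n(\rho_{n+1})$, and $\widetilde{\mat{x}} = \rho_{n+1}C_n(\rho_{n+1})\mat{x}_n(\rho_{n+1})$, as claimed. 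Since $\rho_{n+1}$ is a simple eigenvalue (Perron--Frobenius, established above), this is the solution up to scale; as a byproduct one also sees $C_n(\rho_{n+1})\neq 0$, for otherwise $C_{n+1}(\rho_{n+1}) = \rho_{n+1}\cdot 0 - 1 = -1$.

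The one real obstacle — really a conceptual choice rather than a hard computation — is to resist proving directly that $\mat{x}_{n+1}(\rho_{n+1})$ is an eigenvector. The inductive step of such an attempt would need ``$\mat{x}_n(\rho_{n+1})$ is an eigenvector of $\M_n$,'' which is false since $\rho_{n+1}\neq\rho_n$. Carrying the exact residual $-C_n(\rho)\mat{e}_1$, valid for \emph{every} $\rho$, is precisely what makes the two border edges and the quadratic recurrence $C_{n+1} = \rho C_n^2 - 1$ telescope; once that is in place, everything else is routine bookkeeping of block-matrix products.
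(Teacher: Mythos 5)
Your proof is correct, but it follows a genuinely different route from the paper's. The paper substitutes the block vector directly into the eigenvalue equation at $\rho_{n+1}$, notes that the middle scalar equation is an identity because the last entry of $\mat{x}_n$ is $1$, uses $C_n(\rho_{n+1}) = 1/\sqrt{\rho_{n+1}}$ (a consequence of $C_{n+1}(\rho_{n+1})=0$) to show the top block equation is a scalar multiple of the bottom one, and then disposes of that remaining matrix equation by the unit upper Hessenberg structure: rows $2$ through $d_n$ are exactly what back substitution from the normalized last entry enforces, while the unused first row is deferred to the subsequent Remark as a restatement of the characteristic polynomial. You instead carry the exact residual identity $(\M_n - \rho\mat{I})\,\mat{x}_n(\rho) = -C_n(\rho)\,\mat{e}_1$, valid for every $\rho$, through an induction and only specialize to $\rho = \rho_{n+1}$ at the end. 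What your version buys: it never needs the relation $C_n(\rho_{n+1})=1/\sqrt{\rho_{n+1}}$ nor the Hessenberg back-substitution argument; it treats the first-row equation on the same footing as all the others, since the residual is supported on $\mat{e}_1$ and equals $-C_{n+1}(\rho)$, which vanishes precisely at the eigenvalue --- thereby proving outright what the paper's Remark only asserts; it makes explicit why one must not try to argue that $\mat{x}_n(\rho_{n+1})$ is an eigenvector of $\M_n$ (it is not); and it yields $C_n(\rho_{n+1})\neq 0$ as a free byproduct. What the paper's version buys: it stays close to the computational procedure actually used (back substitution on the sparse Hessenberg system) and exhibits the scalar relation between the two half-blocks, which the authors immediately reuse to identify the smallest entry $C_n(\rho_{n+1})$ and the scaling factor $\sqrt{\rho_{n+1}}$ between the halves. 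Both arguments conclude the same way, invoking simplicity of the Perron root to say this is \emph{the} solution up to scale.
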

\begin{proof}
Notice first that the final component of each $\mat{x}_n(\rho_{n+1})$ is $1$, as intended.
Substitution of equation~\eqref{eq:blocksolution} into equation~\eqref{eq:blockform} gives two matrix equations, \eqref{eq:first} and~\eqref{eq:third} and a scalar equation, \eqref{eq:second}:
\begin{equation}
    \rho_{n+1}C_n(\rho_{n+1})\M_n\mat{x}_n(\rho_{n+1}) + \mat{e}_1 = \rho_{n+1}^2C_n(\rho_{n+1})\mat{x}_n(\rho_{n+1})\>,\label{eq:first}
\end{equation}
\begin{equation}
    C_n(\rho_{n+1})\mat{e}_1 + \M_n \mat{x}_n = \rho_{n+1}\mat{x}_n(\rho_{n+1})\>,\label{eq:third}
\end{equation}
and, because the final component of $\mat{x}_n(\rho_{n+1})$ is $1$, the scalar equation simply becomes the identity
\begin{equation}
    \rho_{n+1}C_n(\rho_{n+1}) = \rho_{n+1}C_n(\rho_{n+1})\>.\label{eq:second}
\end{equation}
Next, using $C_n(\rho_{n+1}) = 1/\sqrt{\rho_{n+1}}$, we see that the matrix equation~\eqref{eq:first} is a simple scalar multiple of the matrix equation~\eqref{eq:third}.  Thus, we only need to solve this final matrix equation.  But this has already been done, recursively: $\M_n$ is upper Hessenberg, so the eigenvector $\mat{x}_n(\rho_{n+1})$ is completely determined by solving rows $2$ through $d_n$ by back substitution given that the final component is $1$.  
\end{proof}
\begin{remark}
The unused row in the matrix equation, namely 
$$
C_n(\rho_{n+1}) + \sum_{j\ge1} M_{1,j}x_j(\rho_{n+1}) = \rho_{n+1}x_1(\rho_{n+1})\>,
$$ must simply be a restatement of the characteristic polynomial.  In some sense we don't need to explicitly solve it: we know how it will work out because by definition $\mat{x}_{n+1}$ is an eigenvector and the only variable left free is $\rho_{n+1}$.  Nonetheless, it is an interesting equation to solve: it involves the $1, 3, 7, \ldots, 2^n-1$ components of $\mat{x}_n(\rho_{n+1})$ (these are the only entries of the first row of $\M_n$ that are nonzero) and does not lead directly to the recurrence relation $C_{n+1}(\rho) = \rho C_n^2(\rho)-1$ but rather needs to use it and the recursive construction of the vector $\mat{x}$ itself.  We leave this as fun for the reader, but note that it gives a sparse representation for $C_{n+1}(\rho)$ that may have other uses.
\end{remark}
\begin{remark}
The details of that proof also identify both the smallest element of that vector, namely $C_n(\rho_{n+1}) = 1/\sqrt{\rho_{n+1}}$ in the middle, and the largest entry $x_1$, which is $\sqrt{\rho_{n+1}}$ times an approximation for the largest entry of the previous vector. \end{remark}

% To compare the lower half of $\mat{x}_{n+1}$ to the previous vector $\mat{x}_n$, let us examine the middle entry of that lower half, which is $C_{n-1}(\rho_{n+1})$, with the corresponding entry $C_{n-1}(\rho_n)$ in $\mat{x}_n$.  We have $C_{n-1}(\rho_{n-1})=0$, and for all $\rho \in (\rho_{n+1},2)$ we have $0 < C_{n-1}(\rho) < 1$ (establishing the upper limit is an easy induction).  Since $\rho_{n-1} < \rho_n < \rho_{n+1} < 2$ this means that $0 < C_{n-1}(\rho_n) = 1/\sqrt{\rho_n}$, justifying our choice of sign of the square root earlier. By using the recurrence twice we can see that
% \[
% C_{n-1}(\rho_{n+1}) = \sqrt{\frac{1 + \rho_{n+1}^{-1/2}}{\rho_{n+1}}}\>.
% \]

Since equation~\eqref{eq:blocksolution} shows that the lower entries of $x_{k}(\rho_{k+1})$ are \emph{fixed} polynomials in $\rho_{k+1}$, and we know $\rho_k \to 2$ as $k \to \infty$, those lower entries actually converge to $[1, 1, 2, 1, 2, 2, 4, 1, 2, 2, 4, 2, 4, 4, 8, 1, \ldots ]$.  This shows up as \href{http://oeis.org/A048896}{Sequence A048896 in the On-line Encyclopedia of Integer Sequences} and is connected to Catalan numbers and to the number of $1$s in the binary expansion of $n$, apparently~\cite{sloane2021}. We can see in retrospect that this is natural: each entry of the eigenvector is either a power of $\lambda$ at $\rho_{n+1}$ or an evaluation of some $C_k(\lambda)$ at $\rho_{n+1}$, and these go to $2$ or $1$, respectively, as $k \to \infty$.
We do not pursue this further here, although it is extremely tempting.

The \emph{upper} part of the vector is somehow more surprising: the leading entry is
\begin{equation}
    x_{n+1,1}(\rho_{n+1}) = \rho_{n+1}^{n} \prod_{k=1}^{n} C_k(\rho_{n+1}) = \frac{2^{n+1}}{\pi}\left(1 + \widetilde{O}(4^{-n})\right) \>.\label{eq:conjecturepi}
\end{equation}
Note that $\rho_{n+1}C_k(\rho_{n+1})$ for $k=1$, $\ldots$,  $n$ are the nonzero elements of the generated periodic orbit of the Mandelbrot set.
We have established that last asymptotic equality only by high-precision numerical experiments, up to $n = 15$ where $d_{n+1} = 65,535$.
We are quite convinced it's true, but have no proof.
We do not pursue this further here either, although it is also extremely tempting.
Another interesting and unexplained experimental fact is that the top of the vector, once the factor of $\pi$ has been removed, appears to be in a scaled \emph{Gould's sequence} \href{http://oeis.org/A001316}{oeis.org/A001316}: if we compute $x_{16}(\rho_{16})$ and scale the topmost $16$ entries (say), we get
$2^{-11}\pi x_{16,16:1} = [ 1, 2, 2, 4, 2, 4, 4, 8, 2, 4, 4, 8, 4, 8, 8, 16]$.  Gould's sequence is visible at least up to the topmost $128$ entries.

% 1, 1, 2, 1, 2, 2, 4, 1, 2, 2, 4, 2, 4, 4, 8, 1, 2, 2, 4, 2, 4, 4, 8, 2, 4, 4, 8, 4, 8, 8, 16, 1

% The eigenvector solution process involves an upper triangular matrix whose condition number we asked you to show was ``only" $O(2^{n})$, and we conclude that changing the matrix by $O(4^{-n})$ will change the results only by at most $O(2^{-n})$.  Indeed this is an upper bound for the differences that we see between the half of the eigenvector $\mat{x}_{n+1}$ and the whole $\mat{x}_n$, and so we believe that we have ``explained'' this near self-similarity.

The recursive application of powers of $\rho_j$, all nearly equal to $2$, explains the bands visible on a $\log_2$ scale.
Since the upper half of the eigenvector is a scaled version of the lower half, with the same scaling factor $\sqrt{\rho_{n+1}}$ applied to each component, this explains the rest. For \emph{this} question, we believe that this answer satisfies Poincar\'e's dictum because in order to reach our explanation, we had to use several powerful mathematical ideas. 
We now turn to a harder problem.

\section{Singular values and vectors.}
Matlab's sparse singular value decomposition (SVD) routine for $\mat{A} = \mat{U}\mat{\Sigma}\mat{V}^T$ can compute the singular vectors belonging to the largest singular value of $\M_n$ quite rapidly---seemingly also of cost $O(d_n)$--- and moreover to do so accurately.  As an instance of timing, Matlab 2019b can compute and plot each of the dominant left and right singular vectors for $n=20$, which means $d_{20} = 2^{20}-1 = 1,048,575$, in under 11 seconds on a 2017 Microsoft Surface Pro; that is, it can work with a (very sparse, true) million-plus by million-plus matrix and compute two million-plus vectors in a ludicrously short time, on a tablet computer.  
% Similarly, Maple's SVD routine, which can be called with variable precision in order to compute small singular values to high relative accuracy, is also rapid enough to produce useful results.

In order to \emph{explain} the features of Figure~\ref{fig:singularvector20} we are going to have to use some facts about the singular value decomposition. The following section summarizes some things we need.

% The largest singular values of the first few $\M_n$ are, to five figures,
% $1.0000$ for $n=1$, $1.8019$ for $n=2$, $2.2948$, $2.6933$, $3.0385$, $3.3478$, $3.6308$, $3.8934$, $4.1394$,
% $4.3717$, $4.5924$, and $4.8030$ for $n=12$. We do not immediately see any regularity in that list. Continuing, we see slow steady growth.  By $n=20$ the largest singular value is about $6.2393$. We have a simple bound (from the degree) and a better \emph{conjectured} bound for these, in equation~\eqref{eq:conjecturedbound} which we will see in a moment.

% In contrast, the largest eigenvalue for $\M_{12}$ is $2 - 3\pi^2/(8\cdot 4^{12}) \approx 1.999999779397$; this regularity in the eigenvalue case is a significant help. [There are ten eigenvalues of $\M_{12}$ bigger than $1.99992$, by the way; the dominant eigenvalue isn't \emph{very} dominant.]
\begin{figure}[t]
    \centering
    %\subcaptionbox{Singular vector $\mat{u}$}
    {\includegraphics[width=10cm]{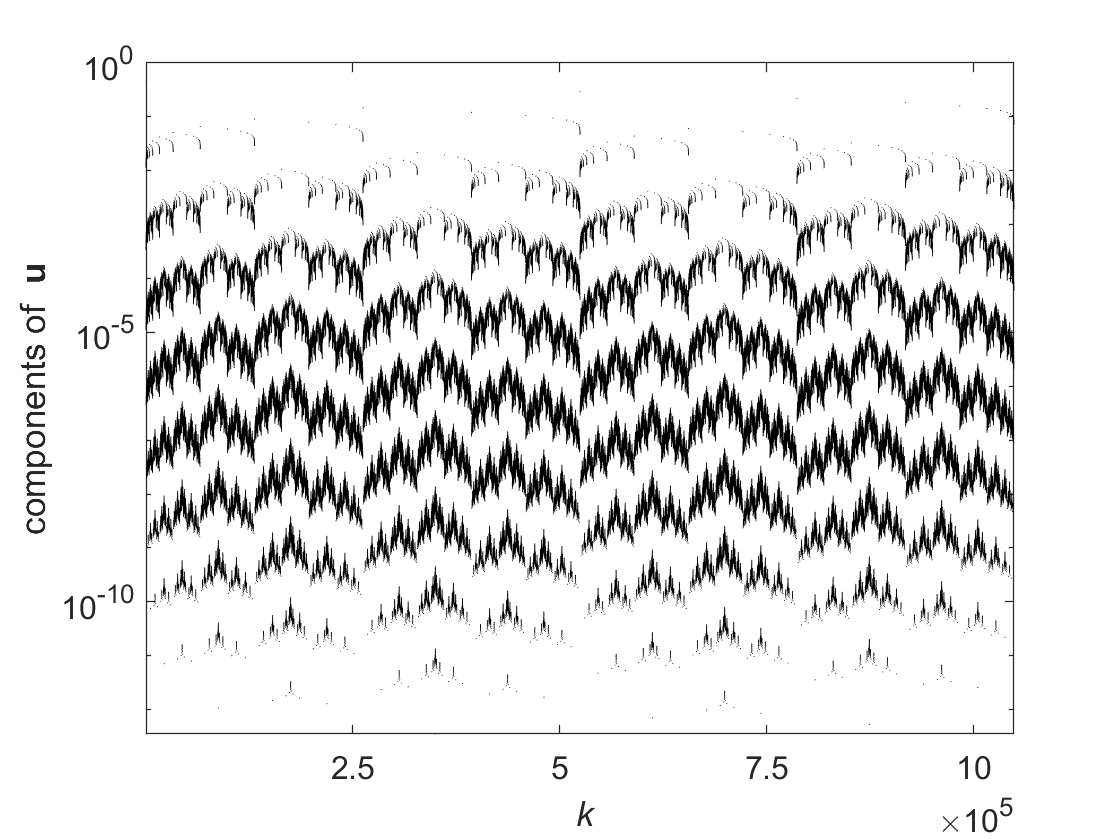}
    }
    %\subcaptionbox{Singular vector $\mat{v}$}{\includegraphics[width=6cm]{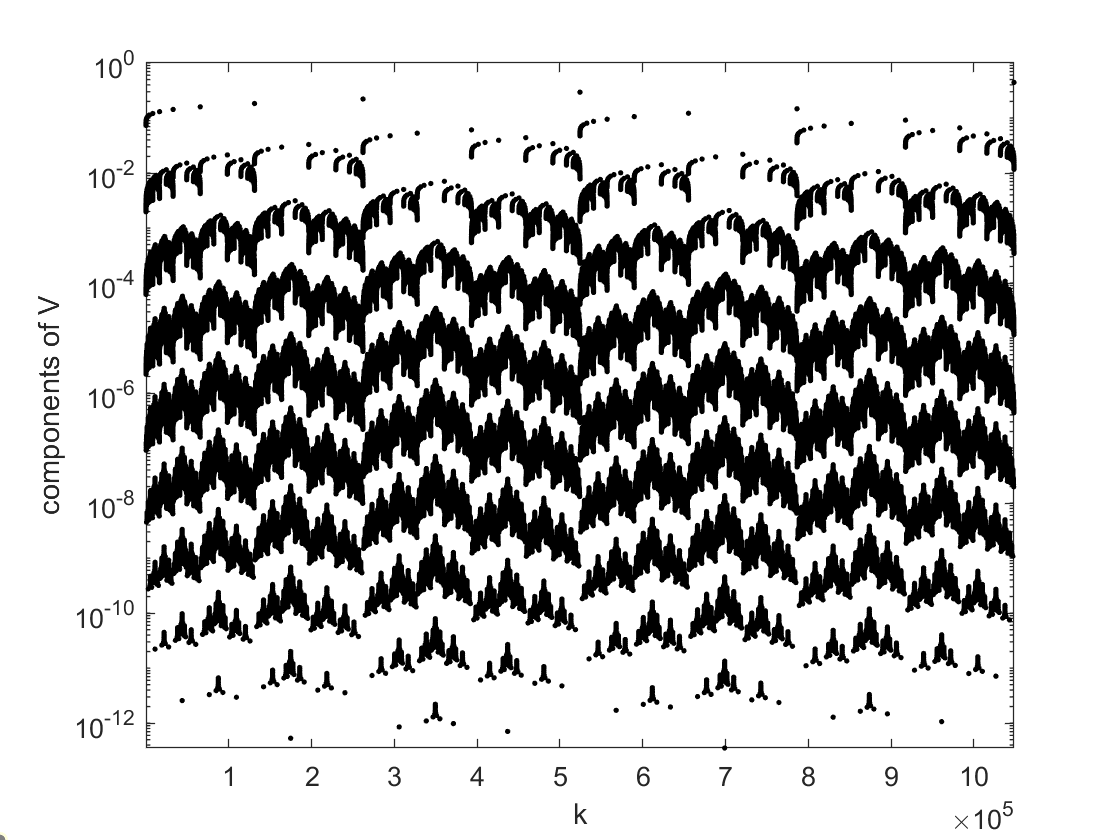}}
    \caption{A discrete plot of the components of the singular vector (dimension $2^{20}-1 = 1,048,575$) corresponding to the dominant singular value of $\M_{20}$ as computed by Matlab with its sparse SVD command \texttt{svds}, plotted on a logarithmic scale. We see many complex structures.}
    \label{fig:singularvector20}
\end{figure}

\begin{figure}
    \centering
    \includegraphics[width=8cm]{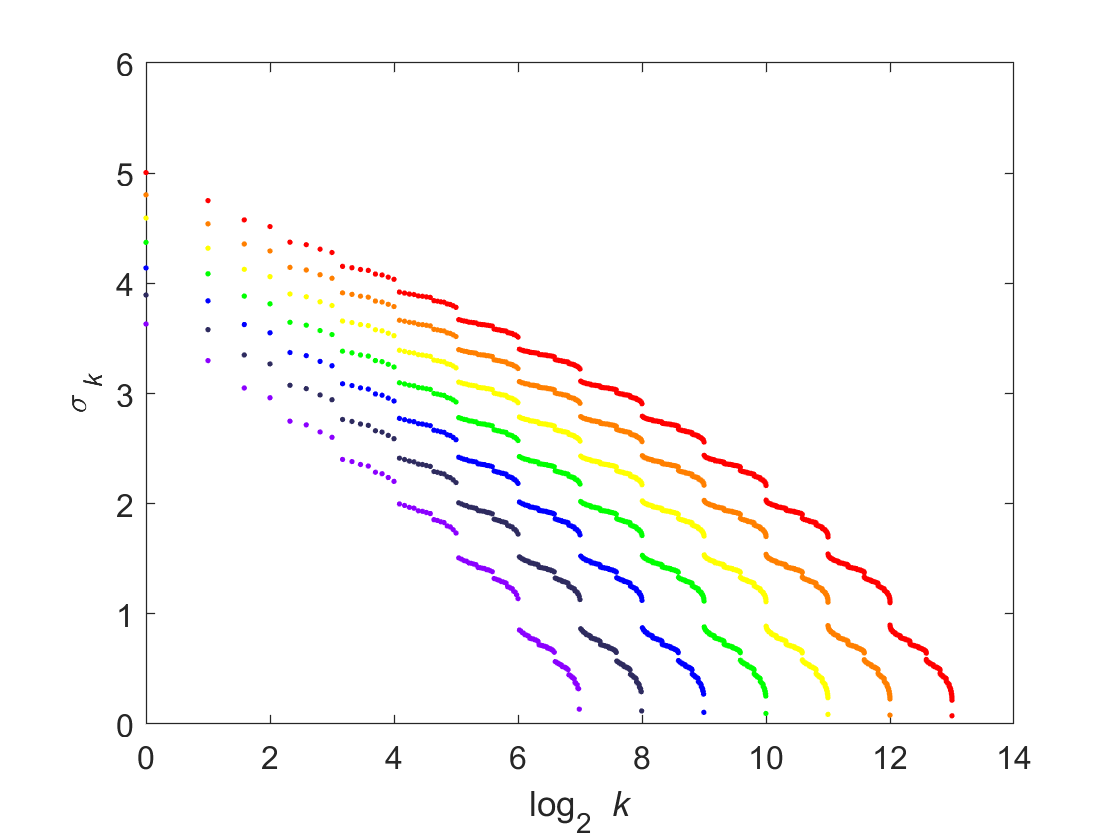}
    \caption{All singular values $\sigma_{n,k}$ for $1 \le n \le 2^{k}-1$ of $\M_k$ for various $k$; specifically, $\M_{7}$ (in violet), $\M_{8}$ (in indigo), $\M_9$ (in blue), $\M_{10}$ (in green), $\M_{11}$ (in yellow), $\M_{12}$ (in orange), and $\M_{13}$ (in red), plotted against $\log_2 n$.  In this plot we see apparent self-similarity at different scales: for each increment in $k$, the singular values move up and a likeness of the old ones seems to be inserted at the bottom. }
    \label{fig:singularvaluesdistribution}
\end{figure}

\subsection{The Jordan--Wielandt matrix.} The singular values of $\M_n$ can be found from the eigenvalues of the well-known Jordan--Wielandt matrix corresponding to $\M_n$:
\begin{equation}
    \begin{bmatrix}
    0 & \M_n \\
    \M_n^T & 0
    \end{bmatrix}\>.\label{eq:JordanWielandt}
\end{equation}
See Figure~\ref{fig:JW34} for some digraphs associated with these matrices.
It is important to note that these graphs are \emph{bipartite}: we can divide the vertices into two groups (say ``red'' and ``green'') and each vertex is connected only to vertices of the other color.
\begin{figure}
    \centering
    \subcaptionbox{Jordan--Wielandt $n=3$}{\includegraphics[width=5.5cm]{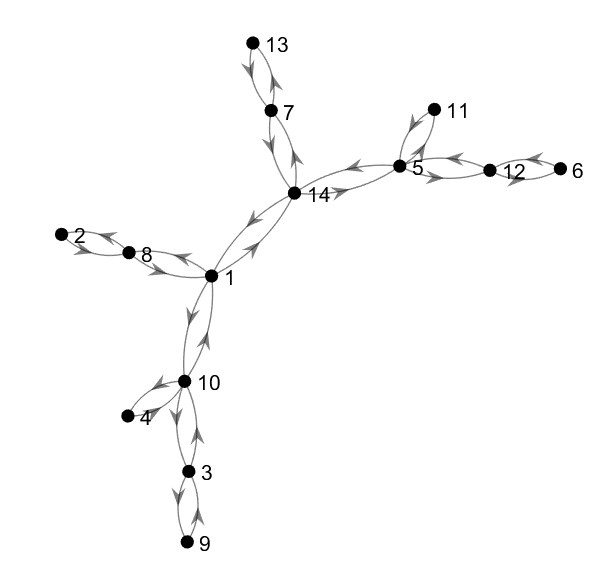}}
    \subcaptionbox{Jordan--Wielandt $n=4$}{\includegraphics[width=5.5cm]{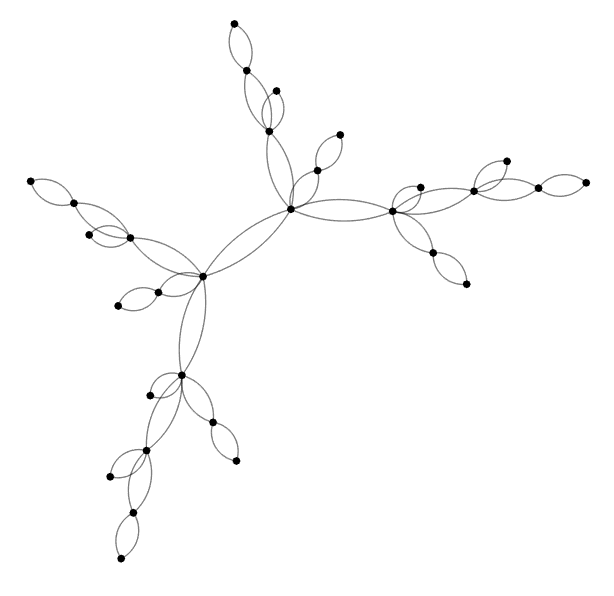}}
    \caption{Force digraphs associated with the Jordan--Wielandt matrix of equation~\eqref{eq:JordanWielandt}. The period ($2$) can be seen in these bipartite digraphs. As before, we declutter the digraph if the number of vertices is too high to show labels and arrows effectively. }
    \label{fig:JW34}
\end{figure}

Alternatively, we could use the eigenvalues of $\M_n^T\M_n$ and $\M_n\M_n^T$ which give the \emph{squares} of the singular values of $\M_n$.
But let us continue with the Jordan--Wielandt matrix.

If the singular value decomposition of $\M_n$ is given by $\M_n = \mat{U}\matg{\Sigma}\mat{V}^T$, with orthogonal matrices $\mat{U}$ and $\mat{V}$ and diagonal matrix $\matg{\Sigma}$ with its entries ordered\footnote{Here we have a notational conflict.  We would like to use the notation $\sigma_k$ to refer to the \emph{largest} singular value of the matrix $\M_k$, but this is confusing; ordinarily the largest singular value of a matrix is $\sigma_1$. We will use $\sigma_{1,k}$ to mean the largest singular value of $\M_k$.} so that $\sigma_1 \ge \sigma_2 \ge \cdots$, then we can form an invertible matrix
\begin{equation}
    \mat{X} = \begin{bmatrix}
    \mat{U} & -\mat{U}\\
    \mat{V} & \mat{V}
    \end{bmatrix}\>,
\qquad{}
    \mat{X}^{-1} = \frac12 \begin{bmatrix}
    \mat{U}^T & \mat{V}^T \\
    -\mat{U}^T & \mat{V}^T
    \end{bmatrix}\>,
\end{equation}
and when we apply this as a similarity transform to the Jordan--Wielandt matrix we get (writing without inverses)
\begin{equation}
    \begin{bmatrix}
    0 & \M_n \\
    \M_n^T & 0
    \end{bmatrix} \begin{bmatrix}
    \mat{U} & -\mat{U}\\
    \mat{V} & \mat{V}
    \end{bmatrix} =   \begin{bmatrix}
    \mat{U} & -\mat{U}\\
    \mat{V} & \mat{V}
    \end{bmatrix}\begin{bmatrix}
    \matg{\Sigma} & 0 \\
    0 & -\matg{\Sigma} 
    \end{bmatrix}\>.
\end{equation}
This reveals the well-known fact that the eigenvalues of the Jordan--Wielandt matrix, which is a nonnegative matrix, are $\pm \sigma_k$ for $1 \le k \le d$, where $d$ is the dimension of the square matrix $\M_n$.  This is a characteristic of adjacency matrices for bipartite graphs: they can always be reordered so that their adjacency matrix is in the above form (not necessarily with square matrix blocks), and the eigenvalues occur in $\pm$ pairs (possibly including $0$).

In particular, here, one largest magnitude eigenvalue of the Jordan--Wielandt matrix is $\sigma_1$, but there is another eigenvalue equally large in magnitude, namely $-\sigma_1$.

% As discussed before, Perron's theorem as extended by Frobenius from positive to  nonnegative \emph{irreducible} matrices says that there may be other simple complex eigenvalues on the circle defining the spectral radius $\rho$. In particular, the period of the matrix plays a role.  As we may see by drawing the bipartite digraph for which the Jordan--Wielandt matrix above is the adjacency matrix, which has $2d$ vertices, there is a circuit containing all vertices and that therefore this matrix is irreducible and hence this extension applies. More, because the graph is bipartite we can see that its period is $h=2$ because the GCD of the lengths of all cycles is $2$. 
%The eigenvector corresponding to $\sigma_1$ can therefore be taken to have only nonnegative components; this is the $2d_n$-dimensional vector formed by joining the first column $u_1$ of $\mat{U}$ to the first column $v_1$ of $\mat{V}$.  

There is more.  We have already shown the computed singular vectors in the case $n=12$ (dimension $d_n=4095$) in Figure~\ref{fig:singularvector12}, although we called them eigenvectors, there (they are: of $\M_n^T\M_n$ or of $\M_n\M_n^T$). The symmetry shown there---namely that the $u$ vector and $v$ vector look to be mirror images of each other---reflects the fact that  $\M_n$ is symmetric about the anti-diagonal; this means that $\M_n^T$ is also symmetric about the anti-diagonal, and hence the Jordan--Wielandt matrix must have eigenvectors symmetric about the half-way point.  Indeed, the vectors $U$ and $V$ are the same, but in reverse order. This suggests that there is an economy that might be useful.  We look for such, in the next section.
% However, we lack some of the tools that we used to attack the fractal eigenvector.  We do not have an asymptotic expansion for the largest singular value, for instance; nor do we have a recurrence relation for the polynomials whose roots are the singular values.

\subsection{Smaller matrices.}

% In contrast to the eigenvector problem, for the \emph{singular} values and singular vectors, we apparently do not have the same advantages.  We seek to understand the visible structures in Figures~\ref{fig:singularvector12} and~\ref{fig:singularvaluesdistribution}. But all we seem to have at the moment is the brute force of computation.

After some thought, we notice that we may use the involutory symmetry of $\M_n$ as follows.  As previously noted, the left and right singular vectors are the same, except in reverse order.  This is because the matrix $\M_n \mat{J}_n$ is \emph{symmetric}, where $\mat{J}_n$ is the involutory ``anti-identity'': for instance, when $n=2$ and the dimension $d_n=2^n-1$ is~$3$, we have
\begin{equation}
    \mat{J}_2 = \begin{bmatrix}
    0 & 0 & 1\\
    0 & 1 & 0 \\
    1 & 0 & 0
    \end{bmatrix}\>.
\end{equation}
From the Jordan--Wielandt matrix, we have that
$\M_n \mat{v} = \sigma\mat{u}$ and $\M_n^T \mat{u} = \sigma\mat{v}$; because $\mat{u} = \mat{J}_n \mat{v}$ and $\mat{v} = \mat{J}_n\mat{u}$, we see that
\begin{equation}
    \M_n \mat{J}_n \mat{u} = \sigma\mat{u}\>.
\end{equation}
That is, a singular vector $\mat{u}$ of $\M_n$ is an \emph{eigenvector} of the sparse, symmetric matrix $\M_n\mat{J}_n$.  Indeed, we have the following propositions.
\begin{proposition}
The matrices $\mat{S}_n = \M_n\mat{J}_n$ can be constructed recursively as follows: as a base, $\mat{S}_1 = [1]$; then
\begin{equation}
    \mat{S}_{n+1} = \begin{bmatrix}
    \mat{e}_1\mat{e}_1^T & 0 & \mat{S}_n\\
    0 & 0 & \mat{e}_1^T \\
    \mat{S}_n & \mat{e}_1 & 0
    \end{bmatrix}\>. \label{eq:srecurrence}
\end{equation}
\end{proposition}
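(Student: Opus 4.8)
The plan is to prove the recurrence by direct block multiplication, once we pin down a block decomposition of the anti-identity $\mat{J}_{n+1}$ compatible with the $d_{n+1} = d_n + 1 + d_n$ partition used in Definition~\ref{def:MandelbrotMatrices}. First I would record three elementary facts about anti-identities. (i) $\mat{J}_m$ is symmetric. (ii) Reversing coordinates swaps the first and last basis vectors, so $\mat{J}_m\mat{e}_1 = \mat{e}_m$ and $\mat{J}_m\mat{e}_m = \mat{e}_1$; consequently $\mat{e}_{d_n}^T\mat{J}_n = (\mat{J}_n\mat{e}_{d_n})^T = \mat{e}_1^T$. (iii) Reversing a vector of length $d_{n+1}$ reverses each of the two outer blocks of length $d_n$ and interchanges them while fixing the lone middle coordinate, so
\begin{equation}
    \mat{J}_{n+1} = \begin{bmatrix} \mat{0} & \mat{0} & \mat{J}_n \\ \mat{0} & 1 & \mat{0} \\ \mat{J}_n & \mat{0} & \mat{0} \end{bmatrix}\>.
\end{equation}

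Next I would form $\mat{S}_{n+1} = \M_{n+1}\mat{J}_{n+1}$ using the block form of $\M_{n+1}$ in equation~\eqref{eq:Mmatrix_recurrence} together with the block form of $\mat{J}_{n+1}$ above, and compute the nine blocks of the product. The substantive simplifications all come from fact (ii): the $(1,1)$ block is $\mat{e}_1\bigl(\mat{e}_{d_n}^T\mat{J}_n\bigr) = \mat{e}_1\mat{e}_1^T$, the $(2,3)$ block is $\mat{e}_{d_n}^T\mat{J}_n = \mat{e}_1^T$, the $(3,2)$ block is the stray $\mat{e}_1$, and the two occurrences of $\M_n\mat{J}_n$ (in the $(1,3)$ and $(3,1)$ slots) are $\mat{S}_n$ by definition; every other block collapses to $\mat{0}$. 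Matching this against the claimed right-hand side of equation~\eqref{eq:srecurrence} completes the induction step, and the base case $\mat{S}_1 = \M_1\mat{J}_1 = [1][1] = [1]$ is immediate.

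I do not expect a genuine obstacle here; the only thing needing care is bookkeeping — getting the block decomposition of $\mat{J}_{n+1}$ right and keeping track of where $\mat{e}_1$ versus $\mat{e}_{d_n}$ appears — and symmetry of $\mat{J}_n$ makes the transposes painless. As a consistency check one can also read off directly from equation~\eqref{eq:srecurrence} that each $\mat{S}_n$ is symmetric, in agreement with the preceding discussion identifying the singular vectors of $\M_n$ with the eigenvectors of $\mat{S}_n = \M_n\mat{J}_n$.
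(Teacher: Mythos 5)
Your proposal is correct and matches the paper's own proof: both compute $\M_{n+1}\mat{J}_{n+1}$ by block multiplication using the block-anti-diagonal form of $\mat{J}_{n+1}$ and the key simplification $\mat{e}_1\mat{e}_{d_n}^T\mat{J}_n = \mat{e}_1\mat{e}_1^T$. Nothing further is needed.
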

\begin{proof}
From equation~\eqref{eq:Mmatrix_recurrence} we find
\begin{align}
    \M_{n+1}\mat{J}_{n+1} &= \begin{bmatrix}
    \M_n & \mat{0} & \mat{e_1}\mat{e_{d_n}}^T \\
    \mat{e_{d_n}}^T & 0 & \mat{0} \\
    \mat{0} & \mat{e_1} & \M_n 
    \end{bmatrix}\begin{bmatrix}
     & & \mat{J}_n \\
     & 1 & \\
     \mat{J}_n & & 
    \end{bmatrix} \\
    &= \begin{bmatrix}
    \mat{e}_1\mat{e}_1^T & 0 & \M_n\mat{J}_n\\
    0 & 0 & \mat{e}_1^T \\
    \M_n\mat{J}_n & \mat{e}_1 & 0
    \end{bmatrix}
\end{align}
because $\mat{e}_1\mat{e}_{d_n}^T\mat{J}_n = \mat{e}_1\mat{e}_{1}^T$.  Equation~\eqref{eq:srecurrence} follows.
\end{proof}
\begin{proposition}
The matrices $\mat{S}_n = \M_n\mat{J}_n$ have eigenvalues $\pm\sigma$ where $\sigma$ is a singular value of $\M_n$. Moreover, each singular value of $\M_n$ occurs as the absolute value of some eigenvalue of $\mat{S}_n$. 
\end{proposition}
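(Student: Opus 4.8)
The plan is to reduce the whole statement to the single matrix identity $\mat{S}_n^2 = \M_n\M_n^T$, which converts a statement about eigenvalues of $\mat{S}_n$ into one about singular values of $\M_n$. I would use two facts already recorded above: that $\M_n$ is \emph{persymmetric} (symmetric about its anti-diagonal), which in matrix form reads $\mat{J}_n\M_n\mat{J}_n = \M_n^T$, and that $\mat{S}_n = \M_n\mat{J}_n$ is therefore symmetric. (The persymmetry is itself an easy induction on the recurrence~\eqref{eq:Mmatrix_recurrence}.) Being real and symmetric, $\mat{S}_n$ has only real eigenvalues and a complete orthonormal eigenbasis, so its spectrum is a multiset of $d_n$ real numbers $\lambda_1,\ldots,\lambda_{d_n}$, and the spectrum of $\mat{S}_n^2$ is exactly $\{\lambda_1^2,\ldots,\lambda_{d_n}^2\}$ with the same multiplicities.

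First I would compute, using $\mat{J}_n^2=\mat{I}$ and $\mat{J}_n\M_n\mat{J}_n=\M_n^T$,
\[
   \mat{S}_n^2 \;=\; \M_n\mat{J}_n\M_n\mat{J}_n \;=\; \M_n\,(\mat{J}_n\M_n\mat{J}_n) \;=\; \M_n\M_n^T\>.
\]
By the very definition of the singular value decomposition $\M_n=\mat{U}\matg{\Sigma}\mat{V}^T$, the eigenvalues of the Gram matrix $\M_n\M_n^T$ are the squared singular values $\sigma^2$ of $\M_n$, counted with multiplicity, and there are $d_n$ of them.

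Next I would simply match the two descriptions of the spectrum of $\mat{S}_n^2=\M_n\M_n^T$: the multiset $\{\lambda_i^2\}_{i=1}^{d_n}$ coincides with the multiset $\{\sigma_j^2\}_{j=1}^{d_n}$. Taking nonnegative square roots of corresponding elements gives, for each eigenvalue $\lambda$ of $\mat{S}_n$, a singular value $\sigma$ of $\M_n$ with $|\lambda|=\sigma$, that is, $\lambda=\pm\sigma$; and conversely each singular value $\sigma$ of $\M_n$ equals $|\lambda|$ for some eigenvalue $\lambda$ of $\mat{S}_n$. That is exactly the proposition.

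There is essentially no obstacle in this argument; the one place that deserves a moment's care is the passage from the matrix identity $\mat{S}_n^2=\M_n\M_n^T$ to a correspondence of \emph{individual} eigenvalues and singular values, which is why the symmetry of $\mat{S}_n$ (hence a real, semisimple spectrum) is invoked first, so that squaring really does carry the spectrum of $\mat{S}_n$ onto that of $\mat{S}_n^2$ with multiplicities intact. If one wanted the finer information of which sign pattern $(\lambda,-\lambda)$ actually occurs, and whether $0$ is an eigenvalue, one would instead return to the Jordan--Wielandt picture of~\eqref{eq:JordanWielandt} together with the relations $\mat{u}=\mat{J}_n\mat{v}$ and $\mat{v}=\mat{J}_n\mat{u}$ between the left and right singular vectors; but the statement as given does not require this, so I would not pursue it here.
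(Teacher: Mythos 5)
Your proposal is correct, but it takes a genuinely different route from the paper. The paper argues through the Jordan--Wielandt matrix of equation~\eqref{eq:JordanWielandt}: from $\M_n\mat{v}=\pm\sigma\mat{u}$, $\M_n^T\mat{u}=\pm\sigma\mat{v}$ together with the relation $\mat{u}=\mat{J}_n\mat{v}$ between left and right singular vectors, it reads off $\M_n\mat{J}_n\mat{u}=\pm\sigma\mat{u}$, so each singular vector $\mat{u}$ is literally an eigenvector of $\mat{S}_n$; the ``moreover'' half is then dispatched by noting that the orthogonal factor $\mat{J}_n$ cannot change singular values. You instead prove the identity $\mat{S}_n^2=\M_n\mat{J}_n\M_n\mat{J}_n=\M_n\M_n^T$ from persymmetry ($\mat{J}_n\M_n\mat{J}_n=\M_n^T$, an easy induction on~\eqref{eq:Mmatrix_recurrence}, as you say) and then match the multiset $\{\lambda_i^2\}$ against the eigenvalues of the Gram matrix, using the real symmetry of $\mat{S}_n$ to pass from $\lambda^2=\sigma^2$ to $\lambda=\pm\sigma$. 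In effect you supply a proof of the paper's unproved ``fact'' that $\mat{S}_n$ is a matrix square root of $\M_n\M_n^T$ and derive the proposition from it. What each approach buys: yours works at the level of spectra, so multiplicities are handled automatically and you never need the relation $\mat{u}=\mat{J}_n\mat{v}$, which for a repeated singular value holds only for a suitable choice of singular vectors and is stated rather than argued in the paper; the paper's version, on the other hand, is more concrete in that it exhibits the singular vectors themselves as eigenvectors of the sparse symmetric matrix $\mat{S}_n$, which is precisely the fact exploited afterwards for computing the dominant singular vector. Both are valid; your argument is arguably the tidier proof of the eigenvalue statement as stated.
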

\begin{proof}
Because
\begin{equation}
        \begin{bmatrix}
    0 & \M_n \\
    \M_n^T & 0
    \end{bmatrix} \begin{bmatrix}
    \mat{u} \\
    \mat{v} 
    \end{bmatrix} =  \pm\sigma \begin{bmatrix}
    \mat{u} \\
    \mat{v}
    \end{bmatrix}
\end{equation}
(we do not know which sign of the eigenvalue $\sigma$ belongs to the eigenvector), and because $\mat{u} = \mat{J}_n\mat{v}$, we see that
\begin{equation}
    \M_n\mat{v} = \M_n\mat{J}_n\mat{u} = \pm \sigma \mat{u}\>.
\end{equation}
This establishes that either $\sigma$ or $-\sigma$ is an eigenvalue of $\mat{S}_n = \M_n\mat{J}_n$.  Since $\mat{J}_n$ cannot change the magnitude of the singular values because it is orthogonal, all singular values of $\M_n$ appear as absolute values of eigenvalues of $\mat{S}_n$.
\end{proof}
% Since eigenvalue computation for symmetric matrices is significantly faster than nonsymmetric eigenvalue computation, 
%which is itself faster than singular value computation, 
% RMC Referee 2, Bullet 6, Numerical computation
% this rewriting provides a useful computational technique.  This is 
% especially true because the sparsity pattern of $\mat{S}_n$ is known.
% We found it quite surprising that not all eigenvalues of $\mat{S}_n$ are positive, however; one actually has to take absolute values to get the singular values.

% But in fact more can be done with these than just speed up computation.
Let $D_n(\lambda) = \det(\lambda\mat{I}-\mat{S}_n)$ be the characteristic polynomial of $\mat{S}_n$.
Here are some facts about $\mat{S}_n$, without proof.
\begin{enumerate}
    \item \label{fact:symmetric} $\mat{S}_n$ is symmetric.
    \item $\det \mat{S}_n = -1$ for $n > 1$.
    \item\label{fact:trace} trace$\mat{S}_n = 1$.
    \item If $N_n = $ trace$\mat{S}_n^2$, then $N_{n+1} = 2N_{n}+3$, so $N_n = 2^{n+1}-3$.  Apparently coincidentally, $N_n=2d_n-1$ is the number of nonzero entries in $\M_n$.
    \item $\mat{S}_n = \M_n\mat{J}$ is a matrix square root of $\M_n \M_n^T$.
    \item The eigenvalues of $\mat{S}_n$ are distinct.
\end{enumerate}
Something that isn't quite a ``fact'' is that the digraph of $\mat{S}_n$ looks rather like the directed graph of the Jordan--Wielandt matrix for $\M_{n-1}$, except it has one simple loop on the first vertex.  This is because, apart from that simple loop, the digraph for $\mat{S}_n$ is also bipartite! We will use this in what follows.  Let $\mat{\widetilde{S}}_{n+1}$ be the matrix equal to $\mat{S}_{n+1}$ apart from the $(1,1)$ entry, which is zeroed:  
\begin{align}
    \mat{S}_{n+1} \approx \mat{\widetilde{S}}_{n+1} &= \begin{bmatrix}
    0 & 0 & \mat{S}_n\\
    0 & 0 & \mat{e}_1^T \\
    \mat{S}_n & \mat{e}_1 & 0
     \end{bmatrix}\>.
    % \nonumber\\
    % &= \begin{bmatrix}
    % 0 & \mat{B}\\
    % \mat{B}^T& 0
    % \end{bmatrix}\>,
    \label{eq:bipartiteSpert}
\end{align}
% where $\mat{B}$ is the $(d_{n}+1) \times d_{n}$ matrix consisting of $\mat{S}_n$ stacked on top of $\mat{e}_1^T$.  To make this matrix equal to $\mat{S}_{n+1}$ we must put a $1$ in the $(1,1)$ entry (corresponding to the only loop in the graph).  

% \begin{proposition}\label{thm:sigbound}
% $\sigma_{1,n} \le n$ 
% and 
% $\sigma_{d_n,n} \ge 1/(2n-1)$.
% \end{proposition}
% \begin{proof}
% Since 
% $\|\mat{A}\|_2^2 \le \|\mat{A}\|_1 \|\mat{A}\|_\infty$ 
% by the H\"older inequality,\footnote{One can give an alternative proof based on the maximum degree, namely~$n$, of any vertex in the associated graph. This result from spectral graph theory is well known in general.} and because 
% $\|\mat{S}_n\|_1 = \|\mat{S}_n\|_\infty = n$, 
% we have that 
% $\sigma_{1,n} \le n$.  
% Similarly, because 
% $\|\mat{M}_n^{-1}\|_1=\|\mat{M}_n^{-1}\|_\infty = 2n-1$, 
% we have that 
% $\sigma_{d_n,n} \ge 1/(2n-1)$.
% \end{proof}

% We will also use the graph and adjacency matrix that we get by deleting vertex $2^n$.  Call this matrix $\mat{\widehat{S}}_{n+1}$:
% \begin{equation}
%     \mat{\widehat{S}}_{n+1} = 
%     \begin{bmatrix}
%     \mat{e}_1\mat{e}_1^T & \mat{S}_n \\
%     \mat{S}_n & 0 
%     \end{bmatrix}\>.
% \end{equation}

% , and $\widehat{D}_n(\lambda)$ be the characteristic polynomial of $\mat{\widehat{S}}_n$.

\begin{theorem}\label{thm:alternate}
If $n>1$, then the $d_{n}$ eigenvalues of $\mat{S}_{n}$ arranged in descending magnitude have $d_{n}-1$ alternations in sign, and the largest magnitude eigenvalue is positive. That is, the eigenvalues of $\mat{S}_{n}$ are $(-1)^{i-1}\sigma_{i,n}$ for $1 \le i \le d_{n}$ where $\sigma_{i,n}$ are the singular values of $\M_{n}$. 
\end{theorem}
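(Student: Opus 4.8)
The plan is to exhibit $\mat{S}_n$ (for $n>1$) as a rank‑one \emph{positive} perturbation of a matrix whose spectrum is the exact mirror image of that of $\mat{S}_n$, and then let Cauchy interlacing do everything. Start from $\mat{\widetilde{S}}_n$, the matrix obtained from $\mat{S}_n$ by zeroing the $(1,1)$ entry (this is \eqref{eq:bipartiteSpert} with $n$ in place of $n+1$). From that block form $\mat{\widetilde{S}}_n$ is block anti‑diagonal, $\mat{\widetilde{S}}_n=\left[\begin{smallmatrix}\mat 0&\mat C\\\mat C^{T}&\mat 0\end{smallmatrix}\right]$, where the first diagonal block has size $d_{n-1}+1=2^{n-1}$ and the second has size $d_{n-1}=2^{n-1}-1$. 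Hence the diagonal sign matrix $\mat P$ that is $+\mat I$ on the first block and $-\mat I$ on the second satisfies $\mat P\mat{\widetilde{S}}_n\mat P=-\mat{\widetilde{S}}_n$, and $\mat P\mat e_1=\mat e_1$ because $\mat e_1$ lies in the first block. Put $\mat S_n^- := \mat{\widetilde{S}}_n-\mat e_1\mat e_1^{T}$. Then $\mat P\mat S_n^-\mat P=-\mat{\widetilde{S}}_n-\mat e_1\mat e_1^{T}=-\mat S_n$, so $\mat S_n^-$ is orthogonally similar to $-\mat S_n$; in particular its spectrum is the negative of that of $\mat S_n$. At the same time $\mat S_n=\mat S_n^-+2\,\mat e_1\mat e_1^{T}$, a rank‑one positive semidefinite update of $\mat S_n^-$. (Note this is a direct argument for each $n$, not an induction; we use only properties of the explicitly given $\mat{\widetilde{S}}_n$.)

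Write the eigenvalues of $\mat S_n$ as $\nu_1\ge\nu_2\ge\cdots\ge\nu_{d_n}$; they are real by Fact~\ref{fact:symmetric} and distinct by the sixth of the facts listed above, so $\nu_1>\nu_2>\cdots>\nu_{d_n}$. The eigenvalues of $\mat S_n^-$ in decreasing order are then $-\nu_{d_n}>\cdots>-\nu_1$, the $i$-th being $-\nu_{d_n+1-i}$. Applying the Cauchy interlacing theorem for rank‑one updates to $\mat S_n=\mat S_n^-+2\mat e_1\mat e_1^{T}$ gives
\[
  \nu_1\ \ge\ -\nu_{d_n}\ \ge\ \nu_2\ \ge\ -\nu_{d_n-1}\ \ge\ \nu_3\ \ge\ \cdots\ \ge\ \nu_{d_n}\ \ge\ -\nu_1\,,
\]
that is, two families of inequalities: $\nu_i+\nu_{d_n+1-i}\ge 0$ for $1\le i\le d_n$, and $\nu_i+\nu_{d_n+2-i}\le 0$ for $2\le i\le d_n$. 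Put $c:=(d_n+1)/2=2^{n-1}$. The case $i=c$ of the first family gives $2\nu_c\ge0$ and the case $i=c$ of the second gives $\nu_c+\nu_{c+1}\le0$; since $\det\mat S_n=-1\ne0$ there is no zero eigenvalue, so $\nu_c>0>\nu_{c+1}$. Thus $\mat S_n$ has exactly $c=2^{n-1}$ positive and $c-1=2^{n-1}-1$ negative eigenvalues.

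It remains to order the eigenvalues by magnitude. I claim the decreasing‑magnitude list is
\[
  \nu_1,\ -\nu_{d_n},\ \nu_2,\ -\nu_{d_n-1},\ \ldots,\ \nu_{c-1},\ -\nu_{c+1},\ \nu_c\,,
\]
whose signs read $+,-,+,-,\ldots,-,+$: exactly $d_n-1$ alternations, beginning positive. Since $\nu_1>\cdots>\nu_c>0$ are already in order and the magnitudes $-\nu_{d_n}>\cdots>-\nu_{c+1}$ of the negative eigenvalues are already in order, it suffices to check $\nu_k\ge-\nu_{d_n+1-k}\ge\nu_{k+1}$ for $1\le k\le c-1$ --- and the left inequality is case $i=k$ of the first family while the right one is case $i=k+1$ of the second. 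Finally, by the Proposition that the eigenvalues of $\mat S_n$ are $\pm$ the singular values of $\M_n$, the $i$-th eigenvalue in decreasing magnitude is $(-1)^{i-1}\sigma_{i,n}$, as asserted.

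The one genuinely clever move is the opening reformulation --- recognizing that $\mat S_n^-=\mat{\widetilde{S}}_n-\mat e_1\mat e_1^{T}$ is a mirror of $\mat S_n$ and that $\mat S_n$ exceeds it by a \emph{positive} rank‑one term; after that everything is a mechanical unwinding of the interlacing inequalities. The only delicate point is bookkeeping around equalities: a tie in magnitude can occur only when $\M_n$ has a repeated singular value, i.e.\ when some $\pm$ pair occurs among the $\nu_i$, and such a tie is broken by listing the positive member first, which is exactly what the formula $(-1)^{i-1}\sigma_{i,n}$ encodes; moreover the Perron--Frobenius theorem (the nonnegative matrix $\mat S_n$ is irreducible and carries a loop at vertex~$1$) guarantees that $\nu_1$ alone attains the largest magnitude, so the leading sign is unambiguously positive.
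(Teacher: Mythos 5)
Your proof is correct, and it takes a genuinely different route from the paper's. The paper's argument deletes vertex $1$, invokes Cauchy interlacing for the vertex-deleted (bipartite) graph, expands $D_{n+1}(\lambda)$ by linearity in the first row into pieces of the form $\lambda p(\lambda)p(-\lambda)$ and $q(\lambda)q(-\lambda)$, and then counts sign changes, settling the final signs with $\det\mat{S}_n=-1$; it needs several auxiliary inductions to guarantee the distinctness of the interlacing spectra. Your argument instead exploits the signature involution $\mat{P}=\mathrm{diag}(\mat{I}_{2^{n-1}},-\mat{I}_{2^{n-1}-1})$ adapted to the bipartition of $\mat{\widetilde{S}}_n$: the identities $\mat{P}\mat{\widetilde{S}}_n\mat{P}=-\mat{\widetilde{S}}_n$ and $\mat{P}\mat{e}_1=\mat{e}_1$ show that $\mat{S}_n^-=\mat{\widetilde{S}}_n-\mat{e}_1\mat{e}_1^T$ is orthogonally similar to $-\mat{S}_n$, so $\mat{S}_n=\mat{S}_n^-+2\mat{e}_1\mat{e}_1^T$ is a rank-one positive semidefinite update of a mirror of itself, and Weyl's rank-one interlacing delivers the full chain $\nu_1\ge-\nu_{d_n}\ge\nu_2\ge-\nu_{d_n-1}\ge\cdots\ge\nu_c$ in one stroke; the middle index plus $\det\mat{S}_n=-1$ fixes the sign counts, exactly as you say. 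What your route buys is economy: no characteristic-polynomial bookkeeping, no sign-change counting, and no extra inductions beyond the facts the paper already lists ($\mat{S}_n$ symmetric, $\det\mat{S}_n=-1$, eigenvalues of $\mat{S}_n$ versus singular values of $\M_n$). What it shares with the paper is one unproved appeal: you use irreducibility (connectivity) of the graph of $\mat{S}_n$, together with the loop at vertex $1$, to make $\nu_1$ strictly dominant, just as the paper asserts without proof that the graph of $\mat{S}_{n+1}$ is connected; a short induction on the recurrence \eqref{eq:srecurrence} (the off-loop part of the graph of $\mat{S}_n$ is the bipartite double cover of the connected, loop-carrying graph of $\mat{S}_{n-1}$, with one extra pendant vertex) would close that for both proofs. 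Your handling of possible magnitude ties, and of why the $i$th largest magnitude is $\sigma_{i,n}$ (the absolute values of the eigenvalues of the symmetric matrix $\mat{S}_n=\M_n\mat{J}_n$ are its singular values, which coincide with those of $\M_n$ since $\mat{J}_n$ is orthogonal), is sound.
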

\begin{remark}
This makes the characteristic
polynomial of $\mat{S}_n$ \emph{self-interlacing} in the sense of~\cite{Tyaglov2017}.
\end{remark}
\begin{proof}
We assume $n\ge 1$ and work with $\mat{S}_{n+1}$.
Consider the graph one gets by deleting vertex $1$, i.e.,~$\mat{\widetilde{S}}_{n+1}$.
Its eigenvalues, which we know are distinct by an additional induction, interlace with those of $\mat{S}_{n+1}$ by Fact~1 of~\cite[Section~47.4]{hogbenhandbook}. Now consider the characteristic polynomial of $\mat{S}_{n+1}$, namely $D_{n+1}(\lambda) = \det(\lambda \mat{I} - \mat{S}_{n+1})$.  Because the determinant is linear in the first row, and the $(1,1)$ entry of the matrix is $\lambda-1$, this is
\begin{align}
D_{n+1}(\lambda) &=    \det \begin{bmatrix}
    \lambda\mat{I}-\mat{e}_1\mat{e}_1^T & 0 & -\mat{S}_n \\
    0 & \lambda & -\mat{e}_1^T \\
    -\mat{S}_n & -\mat{e}_1 & \lambda\mat{I}
    \end{bmatrix} \nonumber\\
&=\det \begin{bmatrix}
    \lambda\mat{I} & 0 & -\mat{S}_n \\
    0 & \lambda & -\mat{e}_1^T \\
    -\mat{S}_n & -\mat{e}_1 & \lambda\mat{I}
    \end{bmatrix}   
    -\det \begin{bmatrix}
    1 &0& 0 & 0\\
    0 &\lambda\mat{I}_{d_n-1} & 0 & -\mat{{L}}_n \\
    0 & 0& \lambda & -\mat{e}_1^T \\
    {-\mat{s}_1} &-\mat{L}_n^T & -\mat{e}_1 & \lambda\mat{I}
    \end{bmatrix}\>.
\end{align}
Here $-\mat{s}_1$ is the first column of $-\mat{S}_n^T$, and $\mat{L_n}$ is defined by the partition $\mat{S}_n^T = [ \mat{s}_1 | \mat{L}_n^T] $; that is, $\mat{L}_n$ is the matrix that remains after we have removed the first row of $\mat{S}_n$.
The first determinant is the characteristic polynomial of the adjacency matrix of a bipartite graph, namely the one that is obtained by deleting the loop in the graph for $\mat{S}_{n+1}$. Its characteristic polynomial, which also can be established by a separate induction to have distinct roots, may therefore be written as $\lambda p(\lambda)p(-\lambda)$, because it is of odd dimension and its nonzero eigenvalues (call them $s_i$, say, for $1 \le i \le 2d_n$) must occur in pairs of positive and negative elements.  The second determinant can be written by Laplace expansion about the first row as $1$ times the characteristic polynomial of the adjacency matrix of another bipartite graph, but now of even dimension; its characteristic polynomial may be written as $q(\lambda)q(-\lambda)$ where the (distinct, by separate induction) roots of this product (call them $t_i$, say, for $1\le i \le 2d_n$) must interlace the eigenvalues of $\mat{S}_{n+1}$ by Fact~1 of~\cite[Section~47.4]{hogbenhandbook}. This means that the signs of $D_{n+1}(t_i) = t_i p(t_i)p(-t_i)$, $i=1, 2, \ldots, 2d_n$ must alternate.  This entails that the $s_i$ also interlace the roots of $\lambda p(\lambda)p(-\lambda)$. Since the nonzero $s_i$ occur in positive and negative pairs, this establishes that there will be $2d_n+1$ sign alternations; and since for a connected graph such as $\mat{S}_{n+1}$ the largest eigenvalue is always positive, the only way to have this interlacing is for the next-largest magnitude eigenvalue be negative.  Similar reasoning establishes that the alternation continues until the requisite intervals are exhausted.  The sign in the smallest interval, which includes $0$, is settled by appealing to the sign of the determinant, which is $-1$.  Because the number of eigenvalues is odd, the smallest eigenvalue is also positive.
\end{proof}
% \begin{remark}
% We have written $D_{n+1}(\lambda)$ as a difference of its odd part and (the negative of its) even part, and interpreted those polynomials as the characteristic polynomials of the graph of $\mat{S}_{n+1}$ with its loop removed and the graph of $\mat{S}_{n+1}$ with its first vertex removed.  Both of those graphs are bipartite.  The conclusion we draw---that the eigenvalues of $\mat{S}_{n+1}$ alternate in sign when sorted in descending order---would thus seem to hold for all graphs on an odd number of vertices which are bipartite except for one loop on vertex~$1$.  We are not aware if this is a known theorem, or if it would have applications elsewhere.
% \end{remark}
\subsection{Patterns versus pareidolia.}
Now we come nearer to our original goal, namely understanding the pictures of the singular vectors.  When we examine the singular vector belonging to $\sigma_{1,k}$ for each of $\M_k$ up to $\M_6$ we begin to see patterns: repeated groups of points shaped vaguely like daggers, or arrowheads, or perhaps boomerangs.  See Figure~\ref{fig:singularvectors67}.
\begin{figure}[h!]
    \centering
    \subcaptionbox{Singular vector $n=6$}{\includegraphics[width=6cm]{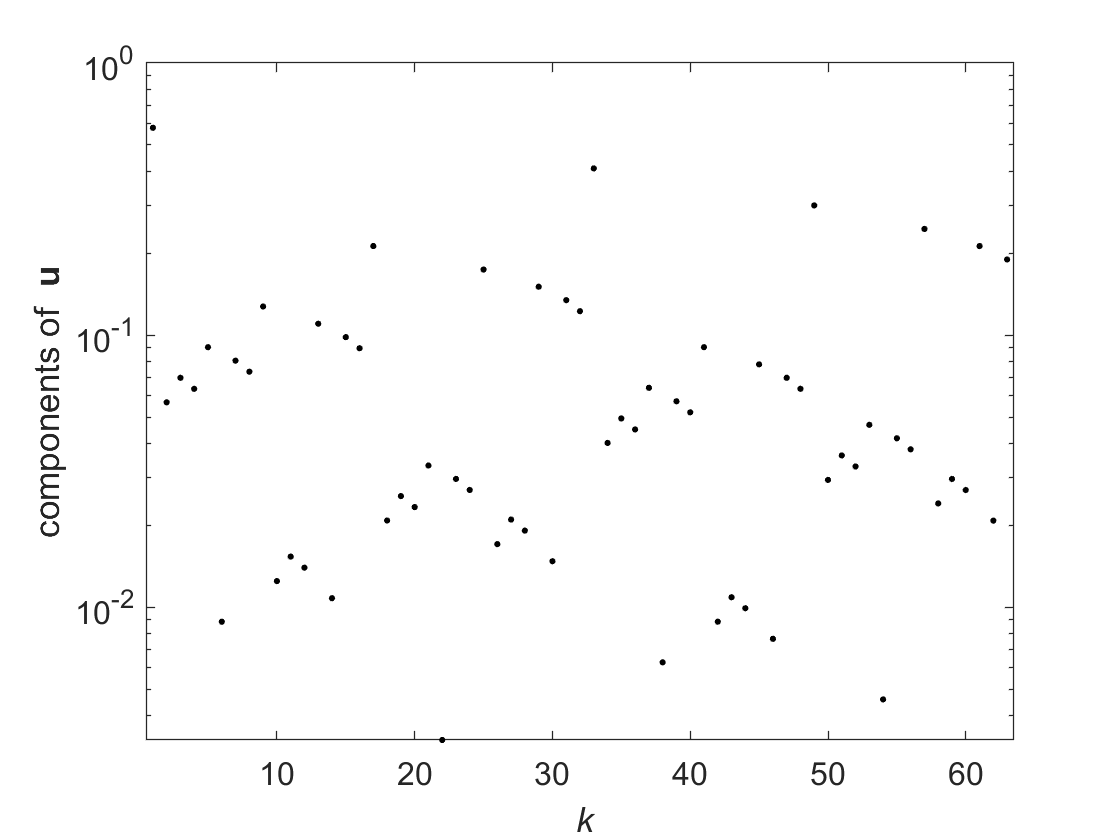}}
    \subcaptionbox{$n=7$}{\includegraphics[width=6cm]{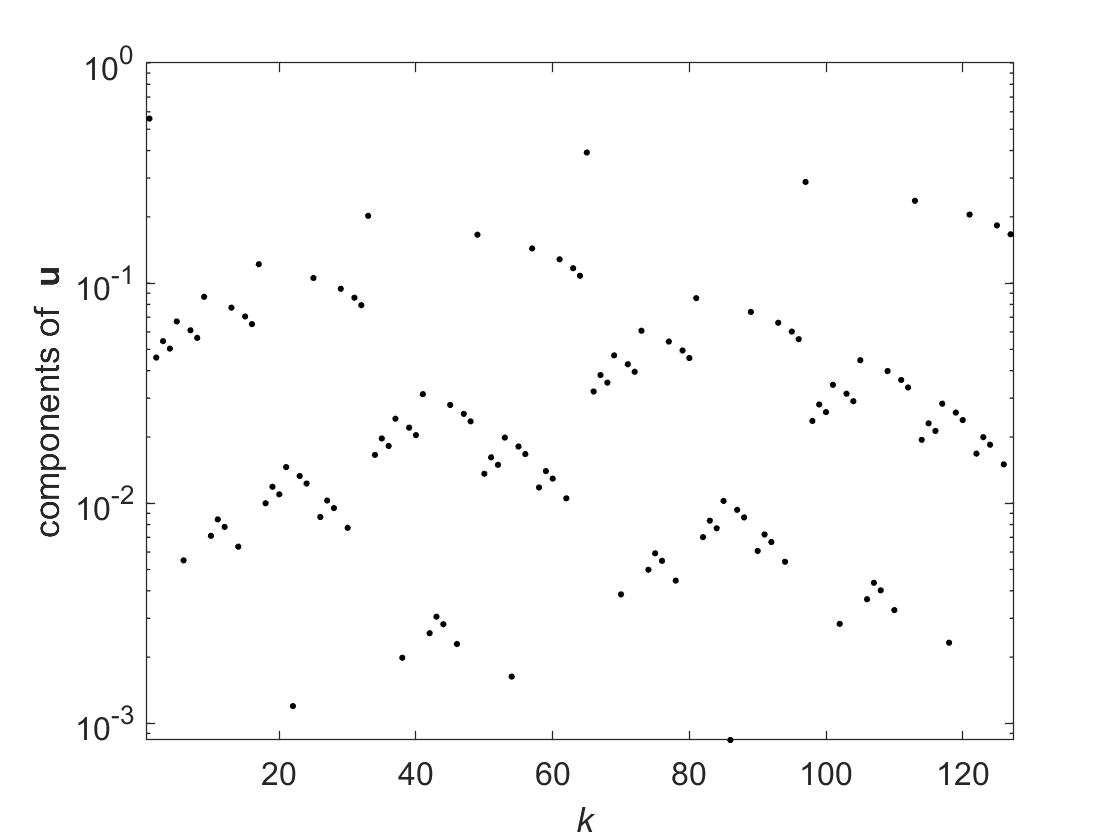}}
    \caption{A discrete plot of the components of the singular vectors corresponding to the dominant singular values of $\M_{6}$ and $\M_7$ drawn on a logarithmic scale. At this density, we begin to see the emergence of complex structures.}
    \label{fig:singularvectors67}
\end{figure}

Moreover, there are the \emph{correct number} of copies of these ``daggers'': twice as many for $\M_{n+1}$ as there were for $\M_n$.  Humans, however, are perhaps overly adroit at seeing patterns---when people see patterns that aren't really there it is called ``pareidolia''---and so we would like to have proof, just as we had for the eigenvector case.  

Let us try do so by a \emph{homotopy continuation}: 
\begin{equation}
    \mat{T}(\e) = \begin{bmatrix}
    \e\mat{e}_1\mat{e}_1^T & 0 & \mat{S}_n\\
    0 & 0 & \e\mat{e}_1^T \\
    \mat{S}_n & \e\mat{e}_1 & 0
    \end{bmatrix}\>. \label{eq:shomotopy}
\end{equation}
When $\e=1$ the eigenvalues of $\mat{T}$ are the same as the singular values of $\M_{n+1}$, in absolute value.  When $\e=0$, the eigenvalues are $0$ and $\{\pm \lambda\}$ where $\{\lambda\}$ are the eigenvalues of $\mat{S}_n$. We just proved in Theorem~\ref{thm:alternate} that the eigenvalues of $\mat{S}_n$ self-interlace, so we may conclude that the set $\{\pm \lambda\}$ contains $2d_n$ distinct values; adding $0$ gives $2d_n+1$ distinct values (no $\M_n$ or $\mat{S}_n$ is singular).  This suggests that we may link the singular values of $\M_{n+1}$ directly to those of $\M_n$ by following the path of each eigenvalue of $\mat{T}$ as $\e$ varies from $0$ to $1$. 

One expects that the largest singular value of $\mat{S}_n$ would be transformed by this process to be the largest singular value of $\mat{S}_{n+1}$.  This seems to work: for every $\e$ the matrix is symmetric and hence the roots are real, and they do not seem to cross (we conjecture that they do not).  
\begin{conjecture}
The eigenvalues of $\mat{T}(\e)$ are simple on $0 \le \e $.
\end{conjecture}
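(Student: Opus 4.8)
The plan is to reduce the conjecture, in stages, to the nonvanishing of a single scalar function, and then to identify precisely the residual coincidence that must be excluded.

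First, since $\mat{T}(\e)$ is real symmetric and depends polynomially on $\e$, Rellich's theorem gives globally real-analytic eigenvalue branches $\mu_1(\e),\dots,\mu_{2d_n+1}(\e)$, and simplicity fails at $\e_0$ exactly when two branches meet, i.e.\ exactly when the discriminant $\Delta(\e)$ of $\det(\mu\mat{I}-\mat{T}(\e))$ — a polynomial in $\e$ — vanishes. We already know $\Delta(0)\neq0$, since the $\e=0$ spectrum is $\{0\}\cup\{\pm\lambda_i\}$ and these are distinct by Theorem~\ref{thm:alternate} together with $\det\mat{S}_n\neq0$; and $\Delta(1)\neq0$, since $\mat{T}(1)=\mat{S}_{n+1}$ and $\mat{S}_{n+1}$ has distinct eigenvalues (one of the facts listed above). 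So $\Delta$ is a nonzero polynomial and everything reduces to showing it has no root in $(0,\infty)$. For $n=1$ a direct computation gives $\Delta(\e)=32\e^{4}+13\e^{2}+4$, manifestly positive on all of $\mathbb{R}$, which suggests the stronger statement for every real $\e$.

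Second, the structural reduction, which exploits the sparsity of~\eqref{eq:shomotopy}. The middle vertex $b$ is a \emph{pendant}: it is joined, with weight $\e$, only to the vertex $w$ heading the second copy of $\mat{S}_n$. Deleting $b$ leaves the $2d_n\times 2d_n$ matrix
\[
\mat{B}(\e)=\begin{bmatrix}\e\,\mat{e}_1\mat{e}_1^{T} & \mat{S}_n\\ \mat{S}_n & \mat{0}\end{bmatrix}=\mat{B}_0+\e\,\mat{e}_1\mat{e}_1^{T},\qquad \mat{B}_0=\begin{bmatrix}\mat{0} & \mat{S}_n\\ \mat{S}_n & \mat{0}\end{bmatrix},
\]
and a Schur complement in the pendant coordinate gives the identity $\det(\mu\mat{I}-\mat{T}(\e))=\mu\det(\mu\mat{I}-\mat{B}(\e))-\e^{2}\det\bigl((\mu\mat{I}-\mat{B}(\e))_{\widehat{w}}\bigr)$, where $(\,\cdot\,)_{\widehat{w}}$ denotes deletion of row and column $w$. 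Now $\mathrm{spec}(\mat{B}_0)=\{\pm\lambda_i\}$ is simple provided $\mat{e}_1$ is a cyclic vector for $\mat{S}_n$; granting that, $\mat{B}(\e)$ is a positive semidefinite rank-one update, so by the classical rank-one secular equation $\mathrm{spec}(\mat{B}(\e))$ consists, for every $\e\ge0$, of $2d_n$ distinct numbers strictly interlacing the $\pm\lambda_i$ and monotone in $\e$. Consequently the eigenvalues of $\mat{T}(\e)$ are the roots of $\mu=\e^{2}G_{ww}(\mu)$ with $G_{ww}(\mu)=\mat{e}_w^{T}(\mu\mat{I}-\mat{B}(\e))^{-1}\mat{e}_w=\sum_i r_i/(\mu-\mu_i(\e))$ and $r_i=\lvert\langle\mat{e}_w,\mat{b}_i(\e)\rangle\rvert^{2}$. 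If every $r_i>0$, then $G_{ww}$ is a genuine Cauchy/Stieltjes function — strictly decreasing, a simple pole at each $\mu_i(\e)$ — so $\mu-\e^{2}G_{ww}(\mu)$ is strictly increasing on each of the $2d_n+1$ intervals cut out by its poles, with exactly one simple root in each; that produces $2d_n+1$ simple eigenvalues of $\mat{T}(\e)$ for every $\e\ge0$ and finishes the proof.

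Third, the obstacle. Two non-degeneracy facts must be proved uniformly in $\e$. The first, that $\mat{e}_1$ is cyclic for $\mat{S}_n$, should yield to a direct induction on~\eqref{eq:srecurrence} (the first column of $\mat{S}_{n+1}$ is $(\mat{e}_1;0;\mat{S}_n\mat{e}_1)$), and I expect it to be routine. The second — that every residue $r_i(\e)$ stays positive for all $\e\ge0$, equivalently that $\mathrm{spec}(\mat{B}(\e))$ and $\mathrm{spec}(\mat{B}(\e)_{\widehat{w}})$ interlace \emph{strictly} for all $\e$ — is the real difficulty, and is where I expect the work to concentrate. A computation identifies $r_{i}(\e)=0$ with $\mu_i(\e)$ landing on a zero of the even rational function $g(\mu)=\mat{e}_1^{T}\mat{S}_n(\mu^{2}\mat{I}-\mat{S}_n^{2})^{-1}\mat{e}_1=\sum_k (\mat{z}_k)_1^{2}\lambda_k/(\mu^{2}-\lambda_k^{2})$, and as $\e$ runs over $(0,\infty)$ each $\mu_i(\e)$ sweeps monotonically across a full gap between consecutive poles $\pm\lambda_k$ of $g$, so such a resonance is not obviously avoidable. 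However, unwinding the eigenvector equations at a resonance $\e_0$ shows that the offending value $\mu_0=\mu_i(\e_0)$, which is then both an eigenvalue of $\mat{T}(\e_0)$ and a \emph{removable} pole of $G_{ww}$, is a genuine double eigenvalue of $\mat{T}(\e_0)$ only if, in addition to $g(\mu_0)=0$, a second scalar identity holds (of the form $\e_0^{2}h(\mu_0)=1$ with $h(\mu)=\mat{e}_1^{T}(\mu^{2}\mat{I}-\mat{S}_n^{2})^{-1}\mat{e}_1$). Thus the conjecture amounts to excluding the simultaneous occurrence, along the explicitly described eigenvalue branches of $\mat{B}(\e)$, of $g=0$ and this auxiliary equation — a codimension-two coincidence. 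Ruling it out uniformly in $n$ will, I expect, require a strengthened inductive statement tracking, for every $\e$, the full interlacing and sign pattern among $\mathrm{spec}(\mat{S}_n)$, $\mathrm{spec}(\mat{B}(\e))$, $\mathrm{spec}(\mat{B}(\e)_{\widehat{w}})$ and $\mathrm{spec}(\mat{T}(\e))$ — an $\e$-parametrized refinement of Theorem~\ref{thm:alternate}. Two easy reductions keep the target in focus: for $\e>0$ the matrix $\mat{T}(\e)$ is nonnegative, irreducible and aperiodic (it has the loop at vertex~$1$), so by Perron--Frobenius its top and bottom eigenvalues are always simple, and as $\e\to\infty$ degenerate perturbation theory separates the three escaping eigenvalues as $\e\pm\tfrac1{\sqrt2}+o(1)$ and $-\e+o(1)$; the real content is the bulk for moderate $\e$. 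A brute-force alternative — certifying directly that $\Delta(\e)$ is positive on $[0,\infty)$, perhaps by showing it is a polynomial in $\e^{2}$ with nonnegative coefficients as for $n=1$ — looks at least as hard to push through the recurrence, which is why I would pursue the secular-equation route.
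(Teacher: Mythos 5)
You should first be aware that the statement you are attacking is stated in the paper as a \emph{conjecture}: the authors offer no proof at all, only the experimental observation that the discriminants of $\det(\lambda\mat{I}-\mat{T}(\e))$ they computed have strictly positive coefficients as polynomials in $\e$ (your $n=1$ computation $32\e^4+13\e^2+4$ is exactly this kind of evidence, and it is correct). Measured against that, your write-up is a sensible and essentially sound \emph{reduction}, not a proof, and you say so yourself. The pendant-vertex identity $\det(\mu\mat{I}-\mat{T}(\e))=\mu\det(\mu\mat{I}-\mat{B}(\e))-\e^{2}\det\bigl((\mu\mat{I}-\mat{B}(\e))_{\widehat{w}}\bigr)$, the identification $\mathrm{spec}(\mat{B}_0)=\{\pm\lambda_i\}$ with the spectrum of $\mat{S}_n$, and the rank-one secular-equation analysis are all correct, and they do localize the difficulty accurately.

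The genuine gap is exactly the two nondegeneracy statements you defer. First, the cyclicity of $\mat{e}_1$ for $\mat{S}_n$ (equivalently: no eigenvector of $\mat{S}_n$ has vanishing first component) is asserted as ``routine by induction'' but not proved, and it is not obviously routine, since the recurrence~\eqref{eq:srecurrence} mixes the two copies of $\mat{S}_n$ through the very coordinate you need to control. Second, and more seriously, the heart of the matter --- that every residue $r_i(\e)=\lvert\langle\mat{e}_w,\mat{b}_i(\e)\rangle\rvert^{2}$ stays strictly positive for all $\e\ge 0$, i.e.\ that the strict interlacing of $\mathrm{spec}(\mat{B}(\e))$ and $\mathrm{spec}\bigl(\mat{B}(\e)_{\widehat{w}}\bigr)$ never degenerates, or alternatively that your ``codimension-two coincidence'' ($g(\mu_0)=0$ together with $\e_0^{2}h(\mu_0)=1$ on an eigenbranch) never occurs --- is precisely equivalent to the conjecture and is left entirely open; a heuristic codimension count is not an argument, since the coincidence must be excluded along one-parameter curves for every $n$. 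One smaller slip: Perron--Frobenius gives simplicity of the \emph{largest} eigenvalue of the nonnegative irreducible $\mat{T}(\e)$ for $\e>0$, but it says nothing about the most negative eigenvalue, so your claimed ``easy reduction'' for the bottom of the spectrum does not stand as written (it is, however, peripheral). In short: your route is a legitimate and arguably more structured attack than the paper's discriminant-positivity evidence, but as it stands it proves nothing beyond what Theorem~\ref{thm:alternate} already gives at $\e=0$, and the conjecture remains open.
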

The evidence we have for this conjecture is that the discriminants we have calculated, of the characteristic polynomials of $\mat{T}(\e)$ with respect to $\lambda$, have strictly positive coefficients as monomial basis polynomials in $\e$.  We do not have a general proof.

The largest singular value of the previous matrix becomes the largest singular value of the current one (the new root that starts from the negative of the largest singular value becomes the second largest of the next one).  The plots, put together, are interesting.  See Figure~\ref{fig:homotopyplot}. To compute these, we first computed the characteristic polynomials $F_k(\lambda,\e)$ for $1 \le k \le 5$. We then differentiated $F_k(\lambda(\e),\e)=0$ with respect to $\e$ to get a differential equation for $\lambda(\e)$; this equation is called the \emph{Davidenko} equation~\cite{boyd2014solving}.  For $k=1$, the initial conditions were $\lambda(0) = 0$, $1$, and $-1$.  We used Maple's \texttt{dsolve/numeric}~\cite{shampine2000initial} to solve the Davidenko equation up to $\e=1$.  We then used the solutions at $\e=1$, together with their negatives, and zero, as the initial conditions for a similar problem with $k$ replaced by $k+1$.  We plot the \emph{absolute values} of the eigenvalue paths in Figure~\ref{fig:homotopyplot}.
This gives the appearance of each old singular value giving birth to two new ones; which, in a sense, is true.

% This process also gives, in principle, a homotopy continuation for the singular vectors, but we did not pursue this.

\begin{figure}
    \centering
    \subcaptionbox{Continuation in $\sigma$}
    {\includegraphics[width=5.5cm]{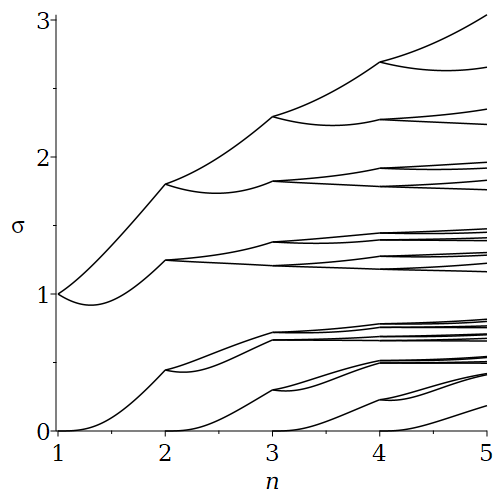}}
    \subcaptionbox{Squared}
    {\includegraphics[width=5.5cm]{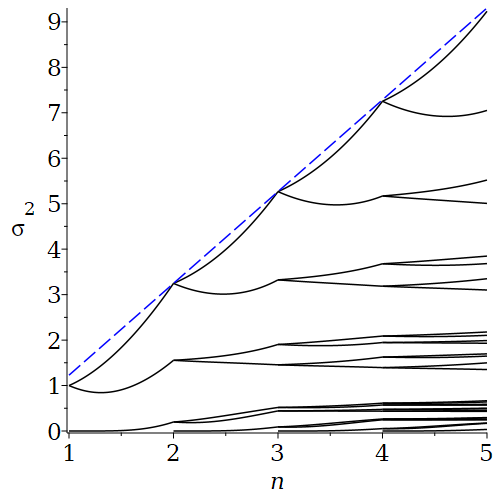}}
    \caption{The homotopies for $n=2$, $3$, $4$, and $5$ plotted together.  The role of $\varepsilon$ is played by $t-1$ in $1\le t \le 2$, by $t-2$ in $2\le t \le 3$, and so on.  Thes figures were drawn by solving the so-called Davidenko equations numerically, where the \emph{initial conditions} for the solution on each interval were provided by the endpoints of the previous solutions, \emph{their negatives}, and the new point $0$. We plot only the absolute values of the eigenvalues on the left, which shows the connections of singular values. On the right, plotting the squares shows our conjectured bound (blue dashed line) from equation~\eqref{eq:conjecturedbound}}
    \label{fig:homotopyplot}
\end{figure}

Another conjecture that comes from this experiment is 
\begin{conjecture}
\begin{equation}
\label{eq:conjecturedbound}
\sigma_{1,n} \le \sqrt{ 2.0193n-0.7914}\>.
\end{equation}
\end{conjecture}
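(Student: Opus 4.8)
The plan is to reduce the statement to the top eigenvalue of a symmetric matrix and then exploit the recursive block structure. Since $\mat{S}_n=\M_n\mat{J}_n$ is symmetric with $\mat{S}_n^2=\M_n\M_n^T$ (a fact recorded above), we have $\sigma_{1,n}^2=\lambda_{\max}(\mat{S}_n^2)$, so it suffices to prove $\lambda_{\max}(\mat{S}_n^2)\le 2.0193\,n-0.7914$. Squaring the recurrence~\eqref{eq:srecurrence} gives the clean form
\[
\mat{S}_{n+1}^2=\begin{bmatrix}\mat{B}_n & \mat{c}_n & \mat{e}_1\mat{c}_n^T\\ \mat{c}_n^T & 1 & \mat{0}\\ \mat{c}_n\mat{e}_1^T & \mat{0} & \mat{B}_n\end{bmatrix},\qquad \mat{B}_n:=\M_n\M_n^T+\mat{e}_1\mat{e}_1^T,
\]
where $\mat{c}_n=\mat{S}_n\mat{e}_1$ is the last column of $\M_n$: a $0/1$ vector with exactly $n$ ones (so $\|\mat{c}_n\|^2=n$) whose first entry is a $1$. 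Because $\lambda_{\max}(\mat{B}_n)\in[\sigma_{1,n}^2,\sigma_{1,n}^2+1]$, an affine bound on $\lambda_{\max}(\mat{B}_n)$ transfers to $\sigma_{1,n}^2$, so I would run the induction on $b_n:=\lambda_{\max}(\mat{B}_n)$. One wrinkle: the $(1,1)$ block of $\mat{B}_{n+1}$ is $\mat{B}_n+\mat{e}_1\mat{e}_1^T$, so one really tracks the one-parameter family $\M_m\M_m^T+t\,\mat{e}_1\mat{e}_1^T$, with $t$ increasing along the recursion---and this growing loop weight is itself one source of the linear term.

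There are two essentially equivalent ways to turn the block form into a recursion for $b_{n+1}$. Variationally, the Rayleigh quotient of $\mat{S}_{n+1}^2$ at a unit vector $(\mat{p},q,\mat{r})$ is $\mat{p}^T\mat{B}_n\mat{p}+\mat{r}^T\mat{B}_n\mat{r}+q^2+2q\,\mat{c}_n^T\mat{p}+2p_1\,\mat{c}_n^T\mat{r}$ ($p_1$ the first entry of $\mat{p}$). Via the homotopy~\eqref{eq:shomotopy}: $\mat{T}(\e)$ is nonnegative, so its Perron eigenvector $\mat{w}(\e)=(\mat{w}_{(1)},w_{(2)},\mat{w}_{(3)})$ is nonnegative, and $\tfrac{d}{d\e}\lambda_{\max}(\mat{T}(\e))^2=2\lambda_{\max}(\mat{T}(\e))\,g(\e)$ with $g(\e)=(\mat{w}_{(1)})_1^2+2w_{(2)}(\mat{w}_{(3)})_1\ge 0$, whence $\sigma_{1,n+1}^2-\sigma_{1,n}^2=\int_0^1 2\lambda_{\max}(\mat{T}(\e))\,g(\e)\,d\e$ (incidentally showing $\lambda_{\max}(\mat{T}(\e))$ is monotone, which is the ``each singular value gives birth to two'' picture). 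Either way the increment is governed by the same two scalars: the mass the dominant eigenvector puts on coordinate $1$, and its overlap with the $n$-sparse vector $\mat{c}_n$ (whose support contains coordinate $1$). The trivial estimates $|p_1|\le\|\mat{p}\|$ and $|\mat{c}_n^T\mat{p}|\le\sqrt n\,\|\mat{p}\|$ already give $\sigma_{1,n+1}^2\le b_n+\sqrt{2n}$, hence $\sigma_{1,n}^2=O(n^{3/2})$; the whole game is to replace $\sqrt n$ by a constant.

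The hard part---and, I expect, the reason this is only a conjecture---is a \emph{delocalization} estimate for the Perron eigenvector $\mat{w}_n$ of $\mat{B}_n$ (a bounded perturbation of the dominant singular vector of $\M_n$): one needs a decay estimate of order roughly $(\mat{w}_n)_1=O(n^{-1/4})$ (the rate forced by the conjectured bound), a matching bound on $\mat{c}_n^T\mat{w}_n$, and a near-maximizer form of both---if $\mat{p}^T\mat{B}_n\mat{p}\ge b_n-\delta$ with $\|\mat{p}\|=1$, then $p_1$ and $\mat{c}_n^T\mat{p}$ are controlled by an explicit function of $\delta$ and of the spectral gap of $\mat{B}_n$. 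That crude bounds are genuinely wasteful here is already visible in $\ell_1$: the row of $\M_n^T\M_n$ indexed by the last vertex has entry-sum $\tfrac12(n^2+3n-2)$, so $\|\M_n^T\M_n\|_1$ grows quadratically even though the spectral radius is conjecturally linear---that row is ``heavy'' only because it is spread thinly over many coordinates. I would attack the delocalization through the self-similar recurrence for $\mat{w}_n$ implicit in the block form: restricted to its two half-blocks, $\mat{w}_n$ is, up to a bounded rescaling, a small perturbation of $\mat{w}_{n-1}$; iterating, the mass of $\mat{w}_n$ on the support of $\mat{c}_n$---which meets $\mat{w}_n$ only at the ``first-vertex'' coordinates of successively nested copies---should sum geometrically and stay $O((\mat{w}_n)_1^2)$, while $(\mat{w}_n)_1^2$ must itself shrink, since the eigenvector spreads over $\Theta(n)$ significant coordinates as $n$ grows. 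Making this quantitative, with constants sharp enough to be useful, is the crux.

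Finally, assembling the pieces and pinning the numbers. Granting $\int_0^1 g(\e)\,d\e\le c\,n^{-1/2}$ (equivalently $b_{n+1}\le b_n+O(1)$ from the variational route), the implicit recursion $\sigma_{1,n+1}^2\le\sigma_{1,n}^2+2\sigma_{1,n+1}\,c\,n^{-1/2}$ closes to $\sigma_{1,n}^2\le 4c^2\,n+O(1)$; one then has to identify the optimal slope as the limiting increment $a_\infty:=\lim_{n\to\infty}(\sigma_{1,n+1}^2-\sigma_{1,n}^2)$ (conjecturally $4c^2=2.0193$) and show that, with the intercept taken to be $\sup_n(\sigma_{1,n}^2-a_\infty n)$, this supremum is attained at $n=2$ with value $-0.7914$. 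Numerically it is: $\sigma_{1,2}^2$ is the largest root of $\lambda^3-5\lambda^2+6\lambda-1$, about $3.247$, which lies essentially on the line $2.0193\,n-0.7914$---this is exactly why $n=2$ is the binding case and why those two constants appear. (The homotopy route needs only that $\lambda_{\max}(\mat{T}(\e))$ stays simple, which Perron--Frobenius supplies for $\e>0$; it does not require the stronger no-crossing conjecture.)
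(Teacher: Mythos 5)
You have not proved this statement, and neither does the paper: in the paper it is a purely empirical conjecture---the constants $2.0193$ and $-0.7914$ come from fitting a straight line to the largest singular values at $n=2$ and $n=3$, and the only supporting evidence given is numerical (the bound overshoots the true value by $0.43$\% at $n=4$ and $0.85$\% at $n=20$). Your algebraic reductions are sound as far as they go: granting the paper's unproved facts that $\mat{S}_n$ is symmetric and $\mat{S}_n^2=\M_n\M_n^T$ (the latter uses persymmetry of $\M_n$), the block formula for $\mat{S}_{n+1}^2$, the identification of $\mat{c}_n=\mat{S}_n\mat{e}_1$ as the last column of $\M_n$ with $\|\mat{c}_n\|^2=n$, the Weyl estimate $\lambda_{\max}(\mat{B}_n)\in[\sigma_{1,n}^2,\sigma_{1,n}^2+1]$, and the derivative formula for $\lambda_{\max}(\mat{T}(\e))$ all check out, as does the observation that $\sigma_{1,2}^2\approx 3.247$ is the largest root of $\lambda^3-5\lambda^2+6\lambda-1$ and lies on the line (necessarily, since the line interpolates the data at $n=2$ and $n=3$).

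The genuine gap is exactly where you flag it, and it is the entire content of the statement. Your unconditional estimate yields only $\sigma_{1,n+1}^2\le b_n+\sqrt{2n}$, i.e., $\sigma_{1,n}^2=O(n^{3/2})$---not even the correct order of growth, let alone the stated constants. The delocalization input you require (a bound of order $n^{-1/4}$ on the first component of the Perron vector of $\mat{B}_n$, a matching bound on its overlap with $\mat{c}_n$, and near-maximizer versions of both with control by the spectral gap) is described as a goal, not established: no spectral-gap estimate for $\mat{B}_n$ or quantitative self-similarity of its Perron vector is actually proved, and nothing in the paper supplies one. Moreover, even granting an $O(1)$ increment, the final step does not pin the numbers: identifying the slope with $a_\infty=\lim_n(\sigma_{1,n+1}^2-\sigma_{1,n}^2)$ and asserting ``conjecturally $4c^2=2.0193$'' is circular, and probably not even the right identification---the paper's own data show the bound is not asymptotically tight, so $2.0193$ is a fitted constant presumably strictly exceeding any limiting increment, and one would still need to prove that the binding case over all $n$ is the interpolated one. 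In short, your program is a reasonable (and more structured) attack than anything in the paper, which offers no proof at all, but the conjecture remains unproved by both you and the authors.
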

The numbers come from
fitting a straight line to the largest singular values at $n=2$ and at $n=3$.  At $n=4$ the conjectured bound is $0.43$\% larger than the true value. By $n=20$, the conjectured bound is larger than the true value by $0.85$\%. 
%Compare Proposition~\ref{thm:sigbound}.
% {\frac {\sqrt [3]{28+84\,i\sqrt {3}}}{6}}+{\frac {14}{3\,\sqrt [3]{28+
%84\,i\sqrt {3}}}}+{\frac{2}{3}}

We infer from our numerical experiments that there are $2^{n-1}$ double roots larger than $1$ in magnitude at $\e=0$, one root exactly $0$, and $2^{n-1}-1$ double roots smaller than $1$ in magnitude at $\e=0$. As we move to $\e=1$, this changes to $2^{n}$ roots larger than $1$ and $2^n-1$ roots smaller than $1$.  We add a new root at $\sigma=0$ and start again.  This homotopy, if we could prove that it behaves as we think it does, would actually explain Figure~\ref{fig:singularvaluesdistribution} (in particular, it explains the gap near $\sigma = 1$ because the new eigenvalue starting at $0$ never seems to cross the line). 

But in fact we stop here: we now have a partial explanation for Figure~\ref{fig:singularvaluesdistribution}, assuming that our conjecture (that the discriminant has only positive coefficients) is true.

\section{Concluding remarks.}
\begin{quote}
\emph{A construction only becomes interesting when it can
be placed side by side with other analogous constructions
for forming species of the same genus}.
---Henri Poincar\'e~\cite{poincare1905science}
\end{quote}

We showed in Theorem~\ref{thm:blockform} that the dominant eigenvector of $\M_n$ appears to acquire a fractal structure in the limit as $n \to \infty$: the eigenvector has two halves, the bottom half being something related to the previous eigenvector and the top half being a scaled copy of that.  This theorem ``explains'' the visual appearance of the numerically computed eigenvectors; or, at least, one aspect of that appearance.  The presence of powers of $\rho_n$ (for $\rho_n = 2 - O(4^{-n})$) explains the discrete levels of values in the eigenvectors.  The appearance of the OEIS sequence \href{http://oeis.org/A048896}{oeis.org/A048896} at the bottom of the bottom half is explained by the asymptotics of the dominant root, equation~\eqref{eq:asymptrho}.
The (conjectured) appearance of $\pi$ in the upper half of the eigenvector, according to equation~\eqref{eq:conjecturepi}, will need future work to explain.  

We claimed at the beginning of the article that the visible features that we tried to explain were not numerical artifacts, but were in fact faithful to the mathematics; we have not proved that fidelity here. The numerical analysis is almost, but not quite, straightforward.
For a clear treatment of the standard eigenvalue and eigenvector perturbation theory, see~\cite{Greenbaum2020}; for a treatment specialized to perturbation of Perron vectors, see~\cite{Dietzenbacher1988}.
The key fact needed is that the \emph{next} largest eigenvalue is, by the results of~\cite{corless2013largest}, $O(4^{-n})$ away, giving an estimate of $O(d_n^2)$ for the condition number for the dominant eigenvector.
We detailed the argument to our own satisfaction, but it is frankly simpler to do the computation again in high precision (we used Maple's variable precision, with 30 and again with 60 Digits; 30 was more than enough) to estimate the largest relative error in the eigenvector.  Our experiments showed (in agreement with our analysis) that this largest relative error grew like $8\times 10^{-18} d_n^2$, resulting in an error of about $2\times10^{-9}$ by $n=14$ when $d_n = 16383$.  Thus, all our eigenvector plots are correct to better than visual accuracy.   

% RMC Referee 2 Bullet 8. Quantifying our 
% statement, still not proving it.
% Moreover, the patterns in the eigenvector pictures from our numerical computations were further confirmed by our Theorem~\ref{thm:blockform}, so they could not have been the result of numerical error. 

The patterns in the singular values and the singular vectors, on the other hand, were not confirmed by any theorems.  Nonetheless, the results are accurate, because the two largest singular values are not that close to one another, and the gap between them determines the sensitivity of the dominant singular vector~\cite{Greenbaum2020}. 

The Mandelbrot set features prominently in the theory of dynamical systems, and much is known about it.  We suspect that our equation~\eqref{eq:conjecturepi}, which gives an expression for the geometric mean of the nonzero elements of the periodic orbit in the Mandelbrot set corresponding to the largest $c$, must be connected in some way with this vast theory, but at this moment we do not know just in what way.

Mandelbrot matrices and polynomials also have many connections to combinatorial problems.  For instance, the fixed point for equation~\eqref{eq:Mandelbrotpolynomials} is a generating function for the Catalan numbers, so the trailing coefficients of Mandelbrot polynomials are Catalan numbers. The polynomials appear to be \emph{unimodal}, meaning that the coefficients increase in size to a maximum, then decay monotonically.  We know of no proof.

The matrix family $\{\M_n\}$ has been generalized in at least two separate ways.  For instance, we mention the recurrence $q_{n+1} = \lambda q_nq_{n-1} +1$ which generates the \emph{Fibonacci--Mandelbrot} polynomials and their analogous companion matrices which contain only elements $\{-1,0,1\}$; there are puzzles here, too.

The matrices $\M_n$ themselves have yet more to tell us.  The inverse of $\M_n$ is sparse and contains only elements from the population $\{-1,0,1\}$; its largest magnitude eigenvalues correspond to the \emph{smallest} magnitude eigenvalues of $\M_n$.  What more can be said about those eigenvalues and eigenvectors? We look forward to finding out.

\medskip\par\noindent
\emph{Dedicated to the memory of Peter B.~Borwein, May 10, 1953--August 23, 2020.} We remember him by using the ``Peter Borwein end-of-proof symbol'', $\natural$ (``naturally'').
\begin{acknowledgment}{Acknowledgments.}
We thank the reviewers and editors for their thoughtful comments. We also thank the staff at Western Library for their work making the literature accessible. This work was supported by NSERC.
\end{acknowledgment}
%\bibliographystyle{maa.bst}
%\bibliography{mandelbrot}

\end{document}